\newtheorem{thm}{Theorem}[section]
\newtheorem{cor}[thm]{Corollary}
\newtheorem{lem}[thm]{Lemma}
\theoremstyle{definition}
\newtheorem{defn}{Definition}[section]
\numberwithin{equation}{section}
\begin{document}
\title[Renormalized solutions to a chemotaxis system ]
{Renormalized solutions to a chemotaxis system with consumption of chemoattractant }%
\author[Wang]{Wang Hengling}%
\address{Department of Mathematics, Southeast University, Nanjing 210096, P. R. China}
\email{hlwang@seu.edu.cn}
\author[Li]{Li Yuxiang}%
\address{Department of Mathematics, Southeast University, Nanjing 210096, P. R. China}
\email{lieyx@seu.edu.cn}
\thanks{Supported in part by NSF of China (No. 11671079, 11601127, 11701290) and NSF of Jiangsu Province (No. BK20170896).}
\subjclass[2000]{35A01, 35K57, 35Q92, 92C17.}%
\keywords{Keller-Segel model; Renormalized solutions; Entropy method; Global existence}

\begin{abstract}
This paper investigates a high-dimensional chemotaxis system with consumption of chemoattractant
 \begin{eqnarray*}
  \left\{\begin{array}{l}
     u_t=\Delta u-\nabla\cdot(u\nabla v),\\
     v_t=\Delta v-uv,\\
  \end{array}\right.
\end{eqnarray*}
under homogeneous boundary conditions of Neumann type, in a bounded convex domain $\Omega\subset\mathbb{R}^n~(n\geq4)$ with smooth boundary. It is proved that if initial data satisfy $u_0\in C^0(\overline{\Omega})$ and $v_0\in W^{1,q}(\Omega)$ for some $q>n$, the model possesses at least one global renormalized solution.
\end{abstract}
\maketitle

\section{Introduction}
In this paper, we consider the global existence of renormalized solutions to the chemotaxis system with consumption of chemoattractant
 \begin{eqnarray}\label{C}
  \left\{\begin{array}{lll}
     u_t=\Delta u-\nabla\cdot(u\nabla v),&{} x\in\Omega,\ t>0,\\
     v_t=\Delta v-uv,&{}x\in\Omega,\ t>0,\\
     \frac{\partial u}{\partial \nu}=\frac{\partial v}{\partial \nu}=0,&{}x\in \partial\Omega,\ t>0,\\
      u(x,0)=u_0(x),~ v(x,0)=v_0(x),&{}x\in \Omega,
  \end{array}\right.
\end{eqnarray}
in a bounded convex domain $\Omega\subset\mathbb{R}^n~(n\geq4)$ with smooth boundary, where the scalar functions $u =u(x, t)$ and $v=v(x, t)$ denote bacterial density and the oxygen concentration, respectively.
$u_0$ and $v_0$ are given functions. $\frac{\partial}{\partial \nu}$ denotes the differentiation with respect to the outward normal derivative on $\partial\Omega$.
Model (\ref{C}) was initially introduced by Keller and Segel \cite{KS1971} to describe the traveling band behavior of chemotactic bacteria, that is, the biased movement of bacteria to the oxygen concentration gradient.
It can be regarded as the `fluid-free' version of the coupled chemotaxis-fluid model which was first presented in \cite{Tuval2005}. Aerobic bacteria such as \emph{Bacillus subtilis} often live in thin fluid layers near solid-air-water contact line, in which the biology of chemotaxis, metabolism, and cell-cell signaling is intimately connected to the physics of buoyancy, diffusion, and mixing \cite{Tuval2005}.

In the last few years, model (\ref{C}) has been studied by some authors. It has been shown by Tao \cite{Tao-JMAA-2011} that (\ref{C}) admits global classical bounded solutions under the assumption that $n\geq2$ and $\|v_0\|_{L^\infty(\Omega)}$ be sufficiently small.
In \cite{Tao&Winkler-JDE-2012}, Tao and Winkler proved that if $n=2$, (\ref{C}) possesses a unique global classical solution which is bounded and satisfies $u(x,t)\rightarrow \bar{u}_0:=\frac{1}{|\Omega|}\int_\Omega u_0$ and $v(x,t)\rightarrow0$
as $t\rightarrow\infty$; in the case $n=3$, for arbitrary large initial data, this problem possesses at least one global weak solution which becomes eventually smooth and also satisfies $(u,v)\rightarrow(\bar{u}_0,0)$ as $t\rightarrow\infty$.
Furthermore, Zhang and Li \cite{Zhang&Li-JMP-2015} obtained that if either $n\leq2$ or $\|v_0\|_{L^\infty(\Omega)}\leq\frac{1}{6(n+1)}, n\geq3$, the global classical solution of (\ref{C}) converges to $(\bar{u}_0,0)$ exponentially in the large time limit.
Similarly, the chemotaxis fluid system has been investigated by some authors, we refer to \cite{Duan&Lorz&Markowich-CPDE-2010,Liu&Lorz-AIHP-2011,Winkler-CPDE-2012,Lorz-M3AS-2010,Winkler-ARMA-2014,Zhang&Li-DCDS-2015} for the further reading.

The concept of renormalized solutions was introduced by Diperna and Lions \cite{Diperna&Lions-CMP-1988,Diperna&Lions-AM-1988,Diperna&Lions-IM-1988}.
In \cite{Fischer-ARMA-2015}, Fischer established the global existence of renormalized solutions to reaction-diffusion systems with entropy-dissipating reactions.
Chen and J\"{u}ngel \cite{Chen-ARXIV-201711} proved the global-in-time existence of renormalized solutions to reaction-cross-diffusion systems for an arbitrary number of variables in bounded domains with no-flux boundary conditions.
For the global existence of renormalized solutions of the Landau equation and Boltzmann equation, see for example \cite{Alexandre-CRASPSIM-1999,Alexandre&Villani-AIHP-2004,Villani-ADE-1996}.
Recently, it was shown in \cite{Winkler-JDE-2018} that the Keller-Segel system with singular sensitivity and signal absorption admits renormalized radial solutions $(u,v)$ which are continuous in $(\overline{\Omega}\setminus\{0\})\times[0,\infty)$ and smooth in $(\overline{\Omega}\setminus \{0\})\times(0,\infty)$, and which solve the corresponding initial-boundary value problem in an appropriate generalized sense.

To the best of our knowledge, for arbitrarily large initial data, whether any kind of solution to (\ref{C}) in high-dimensional exists globally has been an open problem. The difficulty mainly arises from the cross-diffusive term in the first equation when considering the global existence of weak solutions. The known energy estimates are not sufficient to guarantee the boundedness of $u\nabla v$ in $L^s(\Omega\times[0,T])(s>1)$. Therefore, we consider renormalized solutions.

\vskip 3mm
\textbf{Main results.} As usual, we shall assume that the initial data $u_0$ and $v_0$ satisfy
\begin{eqnarray}\label{initialdata}
  \left\{\begin{array}{l}
     u_{0}\in C^0(\overline{\Omega}),\ \ \ u_0>0\ \mathrm{in}\ \overline{\Omega},\\
    v_0\in W^{1,q}(\Omega)\ \ \ \mathrm{for\ some}\ q>n,\ v_0>0\ \mathrm{in}\  \overline{\Omega}.
  \end{array}\right.
\end{eqnarray}
Our main result reads as follows.
\begin{thm}\label{main result}
Let $\Omega\subset\mathbb{R}^n, n\geq4$ be a bounded convex domain with smooth boundary, and suppose that $u_0$ and $v_0$ satisfy (\ref{initialdata}). Then there exists a global renormalized solution of (\ref{C}) in the sense of Definition \ref{defnaaaa} below.
\end{thm}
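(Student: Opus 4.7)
My plan is to build $(u,v)$ as a subsequential limit of classical solutions $(u_\varepsilon,v_\varepsilon)$ to an approximating system and then verify the renormalized formulation in the limit. The first step is to regularize the cross-diffusive flux by $\tfrac{u_\varepsilon\nabla v_\varepsilon}{1+\varepsilon u_\varepsilon}$ (smoothing the initial data if necessary so as to preserve positivity and (\ref{initialdata})), for which standard parabolic theory yields a unique global smooth positive solution. Several $\varepsilon$-independent bounds are then immediate: the maximum principle applied to the second equation gives $0<v_\varepsilon\le\|v_0\|_{L^\infty(\Omega)}$, integration of the first equation preserves the $L^1$-mass of $u_\varepsilon$, and, crucially, the convexity of $\Omega$ enters through the boundary inequality $\partial_\nu|\nabla v_\varepsilon|^2\le 0$ on $\partial\Omega$ (valid since $\partial_\nu v_\varepsilon=0$). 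Testing the $v$-equation with $\tfrac{1}{v_\varepsilon}$ and computing $\tfrac{d}{dt}\int_\Omega \tfrac{|\nabla v_\varepsilon|^2}{v_\varepsilon}$ as in the Tao--Winkler framework then yields
\[
\sup_{t\in(0,T)}\int_\Omega |\nabla v_\varepsilon|^2 + \int_0^T\!\!\int_\Omega\frac{|\nabla v_\varepsilon|^4}{v_\varepsilon^3}\,dx\,dt \le C(T).
\]

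The second step is to exploit the renormalization freedom to control $u_\varepsilon$. For an admissible $\xi\in C^2([0,\infty))$ (bounded together with $\xi'$, with $\xi''\ge 0$, $\xi'(s)s$ bounded, and $\xi''(s)s^2$ bounded), testing the regularized equation for $u_\varepsilon$ against $\xi'(u_\varepsilon)$ and applying Young's inequality yields
\[
\frac{d}{dt}\int_\Omega \xi(u_\varepsilon) + \frac{1}{2}\int_\Omega \xi''(u_\varepsilon)|\nabla u_\varepsilon|^2 \le C\int_\Omega |\nabla v_\varepsilon|^2,
\]
hence a uniform space-time bound on $\xi''(u_\varepsilon)|\nabla u_\varepsilon|^2$. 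Choosing $\xi$ from a countable family exhausting $[0,\infty)$ and combining these estimates with bounds on $\partial_t \xi(u_\varepsilon)$ extracted from the equation gives, via Aubin--Lions, strong $L^1$-convergence of each $\xi(u_\varepsilon)$. A diagonal extraction then produces a single subsequence along which $u_\varepsilon\to u$ almost everywhere on $\Omega\times(0,T)$. In parallel, standard parabolic regularity for the $v$-equation, together with $\|v_\varepsilon\|_{L^\infty}\le\|v_0\|_{L^\infty}$ and the above gradient bounds, yields strong convergence $\nabla v_\varepsilon\to\nabla v$ in $L^2(\Omega\times(0,T))$.

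Finally, I would pass to the limit $\varepsilon\to 0$ in the renormalized weak formulation of the $u$-equation (obtained by multiplying by $\xi'(u_\varepsilon)\varphi$ and integrating by parts), as well as in the distributional form of the $v$-equation; the latter is easy since $u_\varepsilon v_\varepsilon\to uv$ in $L^1$ by dominated convergence. The main obstacle is precisely this passage to the limit in the cross-diffusive term: since only $L^1$-compactness is at hand for $u_\varepsilon$, the product $u_\varepsilon\nabla v_\varepsilon$ has no evident limit in the sense of distributions, which is exactly the difficulty invoked in the introduction and the reason why renormalization is needed -- after the reformulation, only the bounded quantities $\xi(u_\varepsilon)$ and $\xi'(u_\varepsilon)u_\varepsilon$ are paired with the strongly convergent $\nabla v_\varepsilon$. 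The most delicate verification will be the identification of the limit of $\xi''(u_\varepsilon)\nabla u_\varepsilon\cdot\nabla v_\varepsilon$; I plan to handle it by writing $\sqrt{\xi''(u_\varepsilon)}\nabla u_\varepsilon$ as a weakly $L^2$-convergent sequence (with the correct limit identified via the a.e.\ convergence of $u_\varepsilon$ and a chain-rule argument), and combining this with the strong $L^2$-convergence of $\nabla v_\varepsilon$.
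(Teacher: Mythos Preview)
Your overall scheme (regularize the flux by $u_\varepsilon/(1+\varepsilon u_\varepsilon)$, derive the Tao--Winkler energy bounds, extract a.e.\ convergence of $u_\varepsilon$ via Aubin--Lions on truncations, and upgrade $\nabla v_\varepsilon$ to strong convergence) is essentially the one the paper follows. The gap is in the very last step. You single out $\int\xi''(u_\varepsilon)\,\nabla u_\varepsilon\!\cdot\!\nabla v_\varepsilon\,\psi$ as ``most delicate'' and propose to handle it by pairing a weak $L^2$ limit of $\sqrt{\xi''(u_\varepsilon)}\,\nabla u_\varepsilon$ with the strong $L^2$ limit of $\nabla v_\varepsilon$. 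That is fine for the cross term, but you do not say how you will pass to the limit in the \emph{diagonal} term
\[
\int_0^T\!\!\int_\Omega \xi''(u_\varepsilon)\,|\nabla u_\varepsilon|^2\,\psi
=\int_0^T\!\!\int_\Omega \bigl|\sqrt{\xi''(u_\varepsilon)}\,\nabla u_\varepsilon\bigr|^2\psi,
\]
which also appears in the renormalized formulation. Weak $L^2$ convergence of $\sqrt{\xi''(u_\varepsilon)}\,\nabla u_\varepsilon$ gives only lower semicontinuity of this quantity, not convergence; and Definition~\ref{defnaaaa} requires an \emph{equality}, not an inequality. No amount of a.e.\ convergence of $u_\varepsilon$ or strong convergence of $\nabla v_\varepsilon$ repairs this, because the term does not contain $\nabla v_\varepsilon$ at all.

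The paper (following Fischer) resolves exactly this obstruction by a \emph{double} truncation. One first passes $\varepsilon\to 0$ in the equation for $\varphi_E(u_\varepsilon)$, where $\varphi_E$ is the specific family in~(\ref{truncation function}); the two second--order terms $\varphi_E''(u_\varepsilon)|\nabla u_\varepsilon|^2$ and $u_\varepsilon F_\varepsilon'(u_\varepsilon)\varphi_E''(u_\varepsilon)\nabla u_\varepsilon\!\cdot\!\nabla v_\varepsilon$ are not identified in the limit but are packaged into a signed defect measure $\mu^E$. The key point is that the uniform bounds on $\int|\nabla\sqrt{u_\varepsilon}|^2$ and $\int F_\varepsilon(u_\varepsilon)|\nabla v_\varepsilon|^2$, together with the property $s\,\varphi_E''(s)\to 0$ as $E\to\infty$ for each fixed $s$ (conditions (E2), (E7)), force $|\mu^E|(\overline\Omega\times[0,T))\to 0$. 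One then composes with $\xi$ \emph{on top of} $\varphi_E$ (via Fischer's chain-rule lemma) and lets $E\to\infty$; since $\xi'$ has compact support, all integrands are now genuinely bounded and converge pointwise, and the defect disappears. Without this two-scale mechanism your direct limit $\varepsilon\to 0$ in the $\xi$-equation cannot close.
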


In order to construct renormalized solutions, we use the notation from \cite{Fischer-ARMA-2015}. Let $\varphi_E: \mathbb{R}_0^+\rightarrow\mathbb{R}_0^+, E\in\mathbb{N}$, be truncation function subject to the following conditions:\\
(E1) Let $\varphi_E\in C^2(\mathbb{R}_0^+)$.\\
(E2) Assume that there exists $K_1>0$ so that $v|\varphi_E''(v)|\leq K_1$ holds for all $E$ and all $v\in\mathbb{R}_0^+$.\\
(E3) Suppose that for every $E$ the set supp$D\varphi_E$ is bounded.\\
(E4) Assume that $\lim_{E\rightarrow\infty}\varphi_E'(v)=1$ holds for all $v\in\mathbb{R}_0^+$.\\
(E5) Suppose that there exists $K_2>0$ such that $|\varphi_E'(v)|\leq K_2$ holds for every $v\in\mathbb{R}_0^+$ and every $E$.\\
(E6) Assume that $\varphi_E(v)=v$ holds for any $v\in\mathbb{R}_0^+$ with $v<E$.\\
(E7) Suppose that we have $\lim_{E\rightarrow\infty}\sup_{|v|\leq K}|\varphi_E''(v)|=0$ for every $K\in\mathbb{R}^+$.

\vskip 3mm

Such truncations $\varphi_E$ satisfying (E1)-(E7) can indeed be constructed. Let $\phi\in C^\infty(\mathbb{R})$ be a smooth nonincreasing function taking values in $[0,1]$ with $\phi\equiv1$ for $x<0$ and $\phi\equiv0$ for $x>1$. Define
\begin{equation}\label{truncation function}
 \varphi_E(v):=v\phi\left(\frac{v-E}{E}\right)+3E\left(1-\phi\bigg(\frac{v-E}{E}\bigg)\right).
\end{equation}
Then one verifies readily that $\varphi_E$ satisfy conditions (E1)-(E7). Note that we shall also use the same family of truncations in the construction of our renormalized solutions below.

The rest of this paper is organized as follows. In Section 2, we introduce a family of regularized problems and give some preliminary properties. Based on an energy-type inequality, a priori estimates are given in Section 3. Section 4 is devoted to showing the global existence of the regularized problems. Finally, we give the proof of the main result in Section 5.

\section{Approximate problems}
According to the idea from \cite{Tao&Winkler-JDE-2012}, we consider the approximate problems
 \begin{eqnarray}\label{regularized problem}
  \left\{\begin{array}{lll}
     u_{\varepsilon t}=\Delta u_{\varepsilon}-\nabla\cdot(u_{\varepsilon}F_\varepsilon'(u_\varepsilon)\nabla v_\varepsilon),&{} x\in\Omega,\ t>0,\\
     v_{\varepsilon t}=\Delta v_\varepsilon-F_\varepsilon(u_\varepsilon)v_\varepsilon,&{}x\in\Omega,\ t>0,\\
     \frac{\partial u_\varepsilon}{\partial \nu}=\frac{\partial v_\varepsilon}{\partial \nu}=0,&{}x\in \partial\Omega,\ t>0,\\
      u_\varepsilon(x,0)=u_0(x),~ v_\varepsilon(x,0)=v_0(x),&{}x\in \Omega,
  \end{array}\right.
\end{eqnarray}
where $\varepsilon\in(0,1)$.

The approximate function ${F_\varepsilon}$ in (\ref{regularized problem}) can be chosen as
\begin{eqnarray*}
{F_\varepsilon}(s):=\frac1\varepsilon\ln(1+\varepsilon s),\ \ \mathrm{for\ all}\ s\geq0.
\end{eqnarray*}
Note that our choice of ${F_\varepsilon}$ ensures that
\begin{equation}
0\leq{F_\varepsilon}'(s)=\frac1{1+\varepsilon s}\leq1,\ \ \mathrm{and}\ \ 0\leq{F_\varepsilon}(s)\leq s\ \ \mathrm{for\ all}\ s\geq0,\label{eq-F-eq1}
\end{equation}
\begin{equation}
s{F_\varepsilon}'(s)=\frac s{1+\varepsilon s}\leq\frac1\varepsilon,\ \ \mathrm{for\ all}\ s\geq0,\label{eq-F-eq2}
\end{equation}
and that
\begin{equation}
{F_\varepsilon}'(s)\nearrow1\ \ \mathrm{and}\ \ {F_\varepsilon}(s)\nearrow s\ \ \mathrm{as}\ \varepsilon\searrow0\ \ \mathrm{for\ all}\ s\geq0.\label{eq-F-eq3}
\end{equation}
All the above approximate problems admit local-in-time smooth solutions:

\begin{lem}\label{lem2.1}
Suppose that $u_0$ and $v_0$ satisfy (\ref{initialdata}), then for any $\varepsilon\in(0,1)$, there exist $T_{max,\varepsilon}\in(0,\infty]$ and a classical solution $(u_\varepsilon, v_\varepsilon)$ of (\ref{regularized problem}) in $\Omega\times(0,T_{max,\varepsilon})$. Moreover, if $T_{max,\varepsilon}<\infty$, then
\begin{equation}
\|{u_\varepsilon}(\cdot,t)\|_{L^\infty(\Omega)}+\|{v_\varepsilon}(\cdot,t)\|_{W^{1,q}(\Omega)}\rightarrow\infty,\ \mathrm{as} ~t\nearrow T_{max,\varepsilon}.\label{criterion}
\end{equation}
\end{lem}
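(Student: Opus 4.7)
The plan is to establish Lemma \ref{lem2.1} by a standard Banach fixed-point scheme, exploiting the fact that for each fixed $\varepsilon\in(0,1)$ the regularization renders the cross-diffusion coefficient $u_\varepsilon F_\varepsilon'(u_\varepsilon)$ uniformly bounded by $1/\varepsilon$ thanks to (\ref{eq-F-eq2}). First I would fix $R>0$ larger than $\|u_0\|_{L^\infty(\Omega)}+\|v_0\|_{W^{1,q}(\Omega)}+1$ and, for small $T>0$ to be chosen, consider the closed convex set
\begin{equation*}
X_{T,R}:=\Bigl\{(\tilde u,\tilde v)\in C^0([0,T];C^0(\overline{\Omega}))\times C^0([0,T];W^{1,q}(\Omega)) \;:\; \|\tilde u\|_{L^\infty}+\|\tilde v\|_{L^\infty_t W^{1,q}_x}\le R,\; \tilde u\ge 0\Bigr\}.
\end{equation*}
Given $(\tilde u,\tilde v)\in X_{T,R}$, I would define $\Phi(\tilde u,\tilde v)=(u,v)$ by solving the two \emph{decoupled} linear parabolic equations $v_t-\Delta v=-F_\varepsilon(\tilde u)v$ with data $v_0$, and then $u_t-\Delta u=-\nabla\cdot(\tilde uF_\varepsilon'(\tilde u)\nabla v)$ with data $u_0$, both under homogeneous Neumann conditions.

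The next step is to verify that $\Phi$ maps $X_{T,R}$ into itself and is a contraction for $T$ small enough. For $v$, standard $L^p$-maximal regularity together with the embedding $W^{2,q}\hookrightarrow W^{1,q}$ and the boundedness $F_\varepsilon(\tilde u)\le 1/\varepsilon \cdot\ln(1+\varepsilon R)$ gives $\|v(t)\|_{W^{1,q}}\le\|v_0\|_{W^{1,q}}+C(\varepsilon,R)T^{\alpha}$ for some $\alpha>0$. For $u$, I would write the equation in mild form with the Neumann heat semigroup $(e^{t\Delta})_{t\ge 0}$ and use the well-known smoothing estimate $\|e^{t\Delta}\nabla\cdot\varphi\|_{L^\infty}\le C t^{-1/2-n/(2q)}\|\varphi\|_{L^q}$ (valid for $q>n$). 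Since $\|\tilde uF_\varepsilon'(\tilde u)\nabla v\|_{L^q}\le\varepsilon^{-1}\|\nabla v\|_{L^q}\le\varepsilon^{-1}R$, integrability near $0$ requires $1/2+n/(2q)<1$, i.e.\ $q>n$, which is precisely our assumption on $v_0$. Differences $\Phi(\tilde u_1,\tilde v_1)-\Phi(\tilde u_2,\tilde v_2)$ are estimated by the same arguments plus the Lipschitz bounds on $F_\varepsilon$ and $sF_\varepsilon'(s)$ on $[0,R]$, producing a factor $T^{\alpha}$ which makes $\Phi$ a contraction for sufficiently small $T=T(\varepsilon,R)$. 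Banach's fixed point theorem then supplies a unique mild solution on $[0,T]$.

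Non-negativity of $u_\varepsilon$ and positivity of $v_\varepsilon$ on $[0,T]$ follow from the parabolic comparison principle (for $v_\varepsilon$ using $F_\varepsilon(u_\varepsilon)\ge 0$, and for $u_\varepsilon$ by rewriting in non-divergence form with bounded coefficients). Once a bounded mild solution is in hand, I would bootstrap regularity: since $\nabla v_\varepsilon\in L^\infty_tL^q_x$ with $q>n$ and $u_\varepsilon\in L^\infty$, parabolic $L^p$-theory and Schauder estimates upgrade $u_\varepsilon$ and $v_\varepsilon$ to Hölder continuous solutions, and then to classical $C^{2,1}$ solutions on $\Omega\times(0,T]$, meeting the initial data continuously in view of the regularity of $u_0$ and $v_0$.

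Finally, a standard continuation argument extends the solution to a maximal interval $(0,T_{max,\varepsilon})$. To prove the blow-up criterion (\ref{criterion}), I would argue by contradiction: if $T_{max,\varepsilon}<\infty$ but $\|u_\varepsilon(\cdot,t)\|_{L^\infty}+\|v_\varepsilon(\cdot,t)\|_{W^{1,q}}\le M$ on $[0,T_{max,\varepsilon})$, then the above fixed-point construction can be restarted from any time $t_0$ close to $T_{max,\varepsilon}$ with data $(u_\varepsilon(\cdot,t_0),v_\varepsilon(\cdot,t_0))$, producing a solution on $[t_0,t_0+\tau]$ for some $\tau=\tau(\varepsilon,M)>0$ independent of $t_0$, which contradicts the maximality of $T_{max,\varepsilon}$. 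The main (mild) technical point throughout is tracking the dependence on $q>n$ in the semigroup estimate for $\nabla\cdot(u_\varepsilon F_\varepsilon'(u_\varepsilon)\nabla v_\varepsilon)$; once that is handled, the remainder is routine semilinear parabolic theory.
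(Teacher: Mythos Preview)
Your proposal is correct and follows precisely the approach the paper indicates: a Banach fixed-point argument in $C^0_t(C^0_x)\times C^0_t(W^{1,q}_x)$ using Neumann heat-semigroup smoothing estimates (with the key integrability condition $q>n$), followed by parabolic regularity bootstrapping and a standard continuation argument, exactly as in \cite{Winkler-CPDE-2012}. The paper gives no details beyond that citation, so your sketch in fact supplies considerably more than the original.
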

The proof of this lemma is based on well-established methods involving the Banach fixed point theorem, the standard regularity theories of parabolic equations (see \cite{Winkler-CPDE-2012} for instance).

The following estimates of ${u_\varepsilon}$ and ${v_\varepsilon}$ are basic but important in the proof of our result.

\begin{lem}
For each $\varepsilon\in(0,1)$, we have
\begin{equation}
\int_\Omega{u_\varepsilon}(\cdot,t)=\int_\Omega u_0\ \ \ \mathrm{for\ all}\ t\in(0,T_{max,\varepsilon})\label{mass conservation}
\end{equation}
and
\begin{equation}
\|{v_\varepsilon}(\cdot,t)\|_{L^\infty(\Omega)}\leq\| v_0\|_{L^\infty(\Omega)}\ \ \ \mathrm{in}\ \Omega\times(0,T_{max,\varepsilon}).\label{maximum principle}
\end{equation}
\end{lem}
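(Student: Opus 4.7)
The plan is to handle the two estimates separately, both via elementary arguments: an integration of the $u_\varepsilon$-equation for the mass identity, and a parabolic comparison argument for the $L^\infty$-bound on $v_\varepsilon$.

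For (\ref{mass conservation}), I would simply integrate the first equation of (\ref{regularized problem}) over $\Omega$. Using the divergence theorem together with the homogeneous Neumann conditions $\frac{\partial u_\varepsilon}{\partial\nu}=\frac{\partial v_\varepsilon}{\partial\nu}=0$, one finds
\begin{equation*}
\frac{d}{dt}\int_\Omega u_\varepsilon(\cdot,t)=\int_\Omega\Delta u_\varepsilon-\int_\Omega\nabla\cdot\bigl(u_\varepsilon F_\varepsilon'(u_\varepsilon)\nabla v_\varepsilon\bigr)=\int_{\partial\Omega}\frac{\partial u_\varepsilon}{\partial\nu}-\int_{\partial\Omega}u_\varepsilon F_\varepsilon'(u_\varepsilon)\frac{\partial v_\varepsilon}{\partial\nu}=0,
\end{equation*}
so that $\int_\Omega u_\varepsilon(\cdot,t)=\int_\Omega u_0$ for all $t\in(0,T_{max,\varepsilon})$. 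This is justified because Lemma \ref{lem2.1} provides classical regularity on $[0,T]$ for any $T<T_{max,\varepsilon}$.

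For (\ref{maximum principle}) the strategy is to apply the parabolic maximum principle to the second equation in (\ref{regularized problem}). The preliminary step is to guarantee that $F_\varepsilon(u_\varepsilon)\geq 0$, i.e.\ that $u_\varepsilon\geq 0$. Viewing the $u_\varepsilon$-equation as the linear problem
\begin{equation*}
u_{\varepsilon t}=\Delta u_\varepsilon-\bigl(F_\varepsilon'(u_\varepsilon)+u_\varepsilon F_\varepsilon''(u_\varepsilon)\bigr)\nabla u_\varepsilon\cdot\nabla v_\varepsilon-u_\varepsilon F_\varepsilon'(u_\varepsilon)\Delta v_\varepsilon,
\end{equation*}
with smooth coefficients (by Lemma \ref{lem2.1}) and with initial datum $u_0>0$, the classical strong maximum principle yields $u_\varepsilon>0$ throughout $\overline{\Omega}\times(0,T_{max,\varepsilon})$. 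By (\ref{eq-F-eq1}) this forces $F_\varepsilon(u_\varepsilon)\geq 0$.

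Combining this with the nonnegativity of $v_\varepsilon$ (obtained by the same maximum principle on the $v_\varepsilon$-equation, which is linear in $v_\varepsilon$ with the nonnegative zeroth order coefficient $F_\varepsilon(u_\varepsilon)$), the constant $M:=\|v_0\|_{L^\infty(\Omega)}$ is a supersolution of the $v_\varepsilon$-equation, since $M_t-\Delta M+F_\varepsilon(u_\varepsilon)M=F_\varepsilon(u_\varepsilon)M\geq 0$, with $M\geq v_0$ on $\Omega$ and $\frac{\partial M}{\partial\nu}=0$ on $\partial\Omega$. A standard comparison argument (e.g.\ testing $(v_\varepsilon-M)_+$ against itself and applying Gronwall) then gives $v_\varepsilon\leq M$ on $\Omega\times(0,T_{max,\varepsilon})$. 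I do not anticipate any serious obstacle here: the whole argument is routine for smooth solutions, and the positivity of $u_0$ and $v_0$ assumed in (\ref{initialdata}) is precisely what is needed to run the comparison.
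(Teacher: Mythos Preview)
Your proposal is correct and follows exactly the same approach as the paper: integrate the first equation of (\ref{regularized problem}) for (\ref{mass conservation}), and apply the maximum principle to the second equation for (\ref{maximum principle}). The paper's proof consists of precisely these two sentences, and you have simply filled in the routine details (divergence theorem with the Neumann conditions, nonnegativity of $u_\varepsilon$ to ensure $F_\varepsilon(u_\varepsilon)\geq 0$, and the constant supersolution $M=\|v_0\|_{L^\infty(\Omega)}$).
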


\begin{proof}
Integrating the first equation in (\ref{regularized problem}), we obtain (\ref{mass conservation}).
And an application of the maximum principle to the second equation in (\ref{regularized problem}) gives (\ref{maximum principle}).
\end{proof}

\section{A priori estimates}
This section is devoted to establishing an energy-type inequality which will play a key role in the derivation of further estimates.
\begin{lem}\label{lem3.1}
For each $\varepsilon\in(0,1)$, the solution of (\ref{regularized problem}) satisfies
\begin{equation}\label{energy inequality}
\frac{d}{dt}\left\{\int_\Omega u_\varepsilon\ln u_\varepsilon+2\int_\Omega |\nabla\sqrt{v_\varepsilon}|^2\right\}+\int_\Omega \frac{|\nabla u_\varepsilon|^2}{u_\varepsilon}+\int_\Omega v_\varepsilon|D^2\ln v_\varepsilon|^2+\frac12\int_\Omega F_\varepsilon(u_\varepsilon)\frac{|\nabla v_\varepsilon|^2}{v_\varepsilon}\leq0
\end{equation}
for all $t\in(0, T_{max,\varepsilon})$.
\end{lem}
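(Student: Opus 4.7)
The plan is to compute the time derivative of each summand in the Lyapunov functional separately, add them, and take advantage of two cancellations: an algebraic one between the cross-diffusive terms, and a boundary cancellation that uses the convexity of $\Omega$.

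First, I would test the logarithmic entropy against the first equation of \eqref{regularized problem}. Integration by parts (using $\partial_\nu u_\varepsilon = \partial_\nu v_\varepsilon = 0$) yields
\begin{equation*}
\frac{d}{dt}\int_\Omega u_\varepsilon\ln u_\varepsilon \;=\; -\int_\Omega \frac{|\nabla u_\varepsilon|^2}{u_\varepsilon} \;+\; \int_\Omega F_\varepsilon'(u_\varepsilon)\,\nabla u_\varepsilon\cdot\nabla v_\varepsilon.
\end{equation*}

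Next, since $2\int_\Omega|\nabla\sqrt{v_\varepsilon}|^2=\tfrac12\int_\Omega|\nabla v_\varepsilon|^2/v_\varepsilon$, I would differentiate $\int_\Omega |\nabla v_\varepsilon|^2/v_\varepsilon$, using $v_{\varepsilon t}=\Delta v_\varepsilon-F_\varepsilon(u_\varepsilon)v_\varepsilon$. The key ingredients are the Bochner-type identity $\nabla v_\varepsilon\cdot\nabla\Delta v_\varepsilon=\tfrac12\Delta|\nabla v_\varepsilon|^2-|D^2v_\varepsilon|^2$, two integrations by parts (one of which produces a boundary term $\int_{\partial\Omega} v_\varepsilon^{-1}\partial_\nu|\nabla v_\varepsilon|^2$, while the other boundary term vanishes because $\partial_\nu v_\varepsilon=0$), and the pointwise identity
\begin{equation*}
v_\varepsilon\,|D^2\ln v_\varepsilon|^2 \;=\; \frac{|D^2v_\varepsilon|^2}{v_\varepsilon} \;-\; \frac{2\,D^2v_\varepsilon(\nabla v_\varepsilon,\nabla v_\varepsilon)}{v_\varepsilon^2} \;+\; \frac{|\nabla v_\varepsilon|^4}{v_\varepsilon^3}.
\end{equation*}
After expanding $\nabla(F_\varepsilon(u_\varepsilon)v_\varepsilon)=v_\varepsilon F_\varepsilon'(u_\varepsilon)\nabla u_\varepsilon+F_\varepsilon(u_\varepsilon)\nabla v_\varepsilon$, the outcome should be
\begin{equation*}
\frac{d}{dt}\int_\Omega \frac{|\nabla v_\varepsilon|^2}{v_\varepsilon} \;=\; -2\!\int_\Omega v_\varepsilon|D^2\ln v_\varepsilon|^2 -2\!\int_\Omega F_\varepsilon'(u_\varepsilon)\nabla u_\varepsilon\cdot\nabla v_\varepsilon -\!\int_\Omega F_\varepsilon(u_\varepsilon)\frac{|\nabla v_\varepsilon|^2}{v_\varepsilon} +\!\int_{\partial\Omega}\!\frac{1}{v_\varepsilon}\,\partial_\nu|\nabla v_\varepsilon|^2.
\end{equation*}

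Multiplying this identity by $\tfrac12$ and adding it to the entropy identity produces the clean cancellation of the cross-term $\int_\Omega F_\varepsilon'(u_\varepsilon)\nabla u_\varepsilon\cdot\nabla v_\varepsilon$. All that then remains is to show that the residual boundary term $\tfrac12\int_{\partial\Omega}v_\varepsilon^{-1}\partial_\nu|\nabla v_\varepsilon|^2$ is nonpositive. This is exactly the step that invokes the hypothesis of convexity of $\Omega$: since $\partial_\nu v_\varepsilon=0$ on $\partial\Omega$, the classical inequality $\partial_\nu|\nabla v_\varepsilon|^2\leq 0$ on $\partial\Omega$ holds (a consequence of the nonnegativity of the second fundamental form of $\partial\Omega$ together with $\nabla v_\varepsilon$ being tangential). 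Together with $v_\varepsilon>0$, this renders the boundary contribution nonpositive and yields \eqref{energy inequality}.

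The main obstacle is the bookkeeping inside the computation of $\tfrac{d}{dt}\int_\Omega|\nabla v_\varepsilon|^2/v_\varepsilon$, where one must track the boundary contributions carefully and recognize the three algebraically nontrivial terms as $-2v_\varepsilon|D^2\ln v_\varepsilon|^2$; this is the standard Winkler-type calculation, and the only geometric input required beyond it is the sign of $\partial_\nu|\nabla v_\varepsilon|^2$ coming from the convexity of $\partial\Omega$.
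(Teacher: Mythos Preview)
Your proposal is correct and follows exactly the argument the paper has in mind: the paper does not write out the computation but simply refers to \cite[Lemmas 3.1--3.4]{Winkler-CPDE-2012}, which carry out precisely the two-piece calculation you describe (entropy identity for $\int_\Omega u_\varepsilon\ln u_\varepsilon$, Bochner-type identity combined with the pointwise formula for $v_\varepsilon|D^2\ln v_\varepsilon|^2$ for $\int_\Omega|\nabla v_\varepsilon|^2/v_\varepsilon$, cancellation of the cross term, and the sign of $\partial_\nu|\nabla v_\varepsilon|^2$ on $\partial\Omega$ from convexity). Your sketch is a faithful expansion of that reference.
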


\begin{proof}
The proof is based on the first two equations in (\ref{regularized problem}) and  integration by parts,
we refer readers to \cite[Lemmas 3.1-3.4]{Winkler-CPDE-2012} for details.
\end{proof}

We next collect some consequences of the above energy inequality which are convenient for our purpose.

\begin{cor}\label{cor3.2}
There exists $C>0$ such that for all $\varepsilon\in(0,1)$ and $T\in(0, T_{max,\varepsilon})$, the solution of (\ref{regularized problem}) satisfies
\begin{equation}
\int_\Omega u_\varepsilon(\cdot,t)|\log u_\varepsilon(\cdot,t)|\leq C,\ \ \mathrm{for\ all}\ t\in(0,T_{max,\varepsilon}),
\end{equation}
\begin{equation}
\int_0^T\int_\Omega \frac{|\nabla u_\varepsilon|^2}{u_\varepsilon}\leq C,
\end{equation}
\begin{equation}
\int_\Omega |\nabla v_\varepsilon(\cdot,t)|^2\leq C,\ \ \mathrm{for\ all}\ t\in(0,T_{max,\varepsilon}),
\end{equation}
\begin{equation}
\int_0^T\int_\Omega  F_\varepsilon(u_\varepsilon)|\nabla v_\varepsilon|^2\leq C.
\end{equation}
\end{cor}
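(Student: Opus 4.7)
My plan is to integrate the differential inequality of Lemma \ref{lem3.1} in time from $0$ to $T \in (0, T_{\max,\varepsilon})$ and to read off each of the four claimed bounds from the resulting integral inequality, relying on the pointwise estimates (\ref{mass conservation}) and (\ref{maximum principle}) together with elementary facts about the entropy function $s \mapsto s \ln s$.

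First, I would verify that the initial-data quantities on the right-hand side of the integrated inequality are uniformly finite. Since $u_0 \in C^0(\overline{\Omega})$ with $u_0 > 0$ on the compact set $\overline{\Omega}$, the function $u_0 \ln u_0$ is continuous, so $\int_\Omega u_0 \ln u_0$ is finite. Since $v_0 \in W^{1,q}(\Omega)$ with $q > n$ embeds into $C^0(\overline{\Omega})$ and $v_0 > 0$ on $\overline{\Omega}$, we have $v_*:=\min_{\overline{\Omega}} v_0 > 0$, whence
\[
2\int_\Omega |\nabla \sqrt{v_0}|^2 \;=\; \tfrac{1}{2} \int_\Omega \frac{|\nabla v_0|^2}{v_0} \;\le\; \frac{1}{2v_*}\|\nabla v_0\|_{L^2(\Omega)}^2 \;<\; \infty.
\]
Using the pointwise bound $s \ln s \ge -1/e$ I would also obtain $\int_\Omega u_\varepsilon(\cdot,t)\ln u_\varepsilon(\cdot,t) \ge -|\Omega|/e$, so after integrating (\ref{energy inequality}) in time all four nonnegative quantities on its left-hand side are controlled by a constant $C$ independent of $\varepsilon$ and $T$; in particular the second estimate of the corollary follows immediately, and the first and third follow pointwise in $t$.

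To translate the entropy bound $\int_\Omega u_\varepsilon \ln u_\varepsilon \le C$ into the stated bound on $\int_\Omega u_\varepsilon |\ln u_\varepsilon|$, I would use the pointwise inequality $u|\ln u| \le u \ln u + 2/e$ (which is trivial for $u \ge 1$ and which for $u \in (0,1)$ reduces to the known bound $-u \ln u \le 1/e$), obtaining
\[
\int_\Omega u_\varepsilon(\cdot,t)|\ln u_\varepsilon(\cdot,t)| \;\le\; \int_\Omega u_\varepsilon(\cdot,t)\ln u_\varepsilon(\cdot,t) + \frac{2|\Omega|}{e}.
\]
For the $v_\varepsilon$-gradient estimates I would invoke (\ref{maximum principle}): writing $|\nabla v_\varepsilon|^2 = 4 v_\varepsilon |\nabla \sqrt{v_\varepsilon}|^2$ gives
\[
\int_\Omega |\nabla v_\varepsilon(\cdot,t)|^2 \;\le\; 4\|v_0\|_{L^\infty(\Omega)} \int_\Omega |\nabla \sqrt{v_\varepsilon}(\cdot,t)|^2,
\]
and similarly $F_\varepsilon(u_\varepsilon)|\nabla v_\varepsilon|^2 \le \|v_0\|_{L^\infty(\Omega)}\, F_\varepsilon(u_\varepsilon)\,|\nabla v_\varepsilon|^2/v_\varepsilon$, which yields the fourth estimate.

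I do not anticipate any genuine obstacle: the entire proof is routine post-processing of Lemma \ref{lem3.1}. The only step requiring a brief verification is the finiteness of $\int_\Omega |\nabla \sqrt{v_0}|^2$, which is guaranteed precisely by the strict positivity of $v_0$ on $\overline{\Omega}$ assumed in (\ref{initialdata}); without this condition, the initial entropy could blow up and the argument would break down.
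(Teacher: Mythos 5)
Your proposal is correct and follows exactly the paper's (very terse) argument: integrate the energy inequality of Lemma \ref{lem3.1}, bound the entropy from below via $z\log z\geq -1/e$, and use the $L^\infty$ bound (\ref{maximum principle}) to pass from $|\nabla\sqrt{v_\varepsilon}|^2$ and $F_\varepsilon(u_\varepsilon)|\nabla v_\varepsilon|^2/v_\varepsilon$ to the stated quantities. Your additional verifications (finiteness of the initial entropy terms and the pointwise inequality $u|\ln u|\leq u\ln u+2/e$) are accurate and merely make explicit what the paper leaves implicit.
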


\begin{proof}
Integrating (\ref{energy inequality}), according to the inequality $z\log z\geq-\frac1e$ for $z>0$ and (\ref{maximum principle}), we obtain the desired results.
\end{proof}

\begin{lem}{\rm(\cite[Lemma 4.1]{Winkler-CPDE-2012})}\label{lem3.3}
Suppose that $n\geq1$ and that $\Omega\subset\mathbb{R}^n$ is a bounded domain with smooth boundary. Let $p>1$ and $r\geq1$ be such that
\begin{equation}
p\leq\frac{n}{(n-2)_+}
\end{equation}
and
\begin{equation}
r\leq\frac{2p}{n(p-1)}.
\end{equation}
Then, for all $T>0$ and each $M>0$, there exists $C(T,M)>0$ such that if $\varphi\in L^2((0,T);W^{1,2}(\Omega))$ is nonnegative with
\begin{equation}
\int_\Omega \varphi(\cdot,t)\leq M\ \mathrm{for\ all}\ t\in(0,T),
\end{equation}
then the estimate
\begin{equation}
\int_0^T \|\varphi\|_{L^p(\Omega)}^r dt\leq C(T,M)\cdot\left\{\int_0^T\int_\Omega \frac{|\nabla\varphi|^2}{\varphi}+1\right\}^{\frac{n(p-1)r}{2p}}
\end{equation}
holds.
\end{lem}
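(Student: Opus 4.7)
The plan is to reduce the claim to a standard Gagliardo--Nirenberg estimate by the substitution $w:=\sqrt{\varphi}$. Under this change of variables one has the identities
\begin{equation*}
\|\varphi\|_{L^p(\Omega)}^{r}=\|w\|_{L^{2p}(\Omega)}^{2r},\qquad \int_\Omega\varphi=\|w\|_{L^2(\Omega)}^2\leq M,\qquad \int_\Omega\frac{|\nabla\varphi|^2}{\varphi}=4\int_\Omega|\nabla w|^2,
\end{equation*}
so the claim is equivalent to estimating $\int_0^T\|w\|_{L^{2p}(\Omega)}^{2r}\,dt$ in terms of $\int_0^T\|\nabla w\|_{L^2(\Omega)}^2\,dt$, with $\|w\|_{L^2(\Omega)}$ controlled uniformly in time.

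Next I would apply the Gagliardo--Nirenberg inequality in the form
\begin{equation*}
\|w\|_{L^{2p}(\Omega)}\leq C_{GN}\bigl(\|\nabla w\|_{L^2(\Omega)}^{\theta}\|w\|_{L^2(\Omega)}^{1-\theta}+\|w\|_{L^2(\Omega)}\bigr),
\end{equation*}
where scaling dictates $\theta=\frac{n(p-1)}{2p}$; the condition $p\leq n/(n-2)_+$ is precisely what guarantees $\theta\in[0,1]$ so that Gagliardo--Nirenberg is applicable on a bounded domain with smooth boundary. Raising to the power $2r$, using $\|w\|_{L^2(\Omega)}^2\leq M$ to absorb the lower-order factor, and dropping the additive part into a constant $C(M)$ yields the pointwise-in-time bound
\begin{equation*}
\|w(\cdot,t)\|_{L^{2p}(\Omega)}^{2r}\leq C(M)\bigl(\|\nabla w(\cdot,t)\|_{L^2(\Omega)}^{2r\theta}+1\bigr).
\end{equation*}

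Finally I would integrate over $(0,T)$ and invoke Hölder's inequality in time. Since the hypothesis $r\leq 2p/(n(p-1))$ is exactly $2r\theta\leq 2$, one has
\begin{equation*}
\int_0^T\|\nabla w\|_{L^2(\Omega)}^{2r\theta}\,dt\leq T^{1-r\theta}\left(\int_0^T\|\nabla w\|_{L^2(\Omega)}^2\,dt\right)^{r\theta}=T^{1-r\theta}\left(\tfrac14\int_0^T\!\!\int_\Omega\frac{|\nabla\varphi|^2}{\varphi}\right)^{r\theta},
\end{equation*}
and combining with the $+1$ from the constant term reproduces the stated inequality with exponent $r\theta=\frac{n(p-1)r}{2p}$ and constant $C(T,M)$.

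The main obstacle I anticipate is not any single inequality but the careful verification that the admissibility conditions on $p$ and $r$ match the exponents arising from Gagliardo--Nirenberg and Hölder; in particular, one has to be attentive to the endpoint cases ($n=2$ requires $p<\infty$ arbitrary rather than $p\leq n/(n-2)$, and $r\theta=1$ needs the $T^{1-r\theta}$ factor to be interpreted correctly) and to the fact that Gagliardo--Nirenberg on bounded domains carries the extra $\|w\|_{L^2}$ term, which is what forces the additive $+1$ inside the braces on the right-hand side of the target inequality.
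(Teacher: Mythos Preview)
Your proposal is correct and is exactly the standard argument: the substitution $w=\sqrt{\varphi}$, the Gagliardo--Nirenberg interpolation with $\theta=\tfrac{n(p-1)}{2p}$, and H\"older in time are precisely the three steps used in the original proof of this lemma in \cite[Lemma~4.1]{Winkler-CPDE-2012}, to which the present paper simply defers without giving its own argument. Your identification of the endpoint issues (the $n\leq2$ case, the additive $\|w\|_{L^2}$ term producing the $+1$) is also accurate.
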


In view of (\ref{mass conservation}), the above lemma immediately implies the following.

\begin{cor}\label{3.4}
Suppose that $n\geq4$. Then for all $T\in(0, T_{max,\varepsilon})$, there exists $C>0$ such that for any $\varepsilon\in(0,1)$, the solution of (\ref{regularized problem}) satisfies
\begin{equation}
\int_0^T\int_\Omega u_\varepsilon^{\frac{n+2}{n}}\leq C.
\end{equation}
\end{cor}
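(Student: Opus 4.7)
The plan is to apply Lemma \ref{lem3.3} directly to $\varphi = u_\varepsilon$, with the particular choice of exponents $p = r = \frac{n+2}{n}$. This choice is natural: the conclusion of the lemma reads $\int_0^T \|\varphi\|_{L^p(\Omega)}^r \, dt \le C$, and taking $p = r$ makes this exactly $\int_0^T \int_\Omega u_\varepsilon^{(n+2)/n}$, which is what we want.

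First I would verify the two admissibility conditions imposed by Lemma \ref{lem3.3}. The bound $p \le n/(n-2)_+$ reduces, for $n \ge 3$, to $(n+2)(n-2) \le n^2$, i.e. $-4 \le 0$, which is trivial. A direct computation gives
\[
\frac{2p}{n(p-1)} \;=\; \frac{2(n+2)/n}{n\cdot(2/n)} \;=\; \frac{n+2}{n} \;=\; r,
\]
so the second constraint $r \le 2p/(n(p-1))$ also holds, in fact with equality. The mass bound $\int_\Omega u_\varepsilon(\cdot,t) \le M$ required by the lemma is supplied for free by the conservation identity (\ref{mass conservation}) with $M := \int_\Omega u_0$, which is a constant depending only on the initial data and not on $\varepsilon$.

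With these ingredients in place, Lemma \ref{lem3.3} produces a constant $C(T,M) > 0$, independent of $\varepsilon$, such that
\[
\int_0^T \int_\Omega u_\varepsilon^{(n+2)/n}\, dx\, dt \;\le\; C(T,M)\cdot\left\{\int_0^T\int_\Omega \frac{|\nabla u_\varepsilon|^2}{u_\varepsilon} + 1\right\}^{n(p-1)r/(2p)}.
\]
The exponent on the right simplifies to
\[
\frac{n(p-1)r}{2p} \;=\; \frac{n\cdot (2/n)\cdot (n+2)/n}{2(n+2)/n} \;=\; 1,
\]
so the estimate is linear in the Fisher-information-type quantity. Since that quantity is controlled uniformly in $\varepsilon$ by Corollary \ref{cor3.2}, the desired bound follows immediately.

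Honestly there is no real obstacle here: the whole argument is a single application of Lemma \ref{lem3.3} once the right parameters have been identified. The only conceptual point worth highlighting is that $p = (n+2)/n$ is precisely the borderline exponent for which Lemma \ref{lem3.3} delivers a \emph{linear} (exponent $1$) bound in terms of the Fisher information $\int\int |\nabla u_\varepsilon|^2/u_\varepsilon$; for any larger $p$ the admissibility of $r$ would degrade and one would be forced into a strictly superlinear dependence that would not combine as cleanly with Corollary \ref{cor3.2}.
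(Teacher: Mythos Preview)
Your proof is correct and follows exactly the paper's approach: apply Lemma \ref{lem3.3} with $p=r=\frac{n+2}{n}$, using the mass conservation (\ref{mass conservation}) and the Fisher-information bound from Corollary \ref{cor3.2}. The paper's own proof is the one-line statement of precisely this application.
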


\begin{proof}
It is a consequence of Corollary \ref{cor3.2} and of Lemma \ref{lem3.3} applied to $p:=\frac{n+2}{n}$ and $r:=\frac{n+2}{n}$.
\end{proof}

\section{Global solvability for the approximate problems}\label{sectionfour}
Now we are in position to show that the solution of  the approximate problem (\ref{regularized problem}) is actually global in time. The idea of the
proof is based on the argument in \cite{Winkler-CPDE-2012}. Throughout this section,
all constants below possibly depend on $\varepsilon$.

\begin{lem} \label{lem4.1}
For each $\varepsilon\in(0,1)$, we have $T_{max,\varepsilon}=\infty$; that is, the solutions of (\ref{regularized problem}) are global in time.
\end{lem}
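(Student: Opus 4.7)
The plan is to argue by contradiction: suppose $T_{max,\varepsilon} < \infty$, and derive uniform bounds on $\|u_\varepsilon(\cdot,t)\|_{L^\infty(\Omega)} + \|v_\varepsilon(\cdot,t)\|_{W^{1,q}(\Omega)}$ on $(0,T_{max,\varepsilon})$ that contradict the extensibility criterion (\ref{criterion}). The key $\varepsilon$-dependent gain is (\ref{eq-F-eq2}): the effective chemotactic coefficient $u_\varepsilon F_\varepsilon'(u_\varepsilon) = u_\varepsilon/(1+\varepsilon u_\varepsilon)$ is bounded by $1/\varepsilon$, so that the quasilinear structure of the first equation degenerates to a milder, essentially semilinear problem for each fixed $\varepsilon$. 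All constants below are allowed to depend on $\varepsilon$.

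The first step is to test the first equation in (\ref{regularized problem}) with $u_\varepsilon^{p-1}$ for a large exponent $p > 1$. Integration by parts and Young's inequality, combined with $u_\varepsilon F_\varepsilon'(u_\varepsilon) \leq 1/\varepsilon$, lead to a differential inequality of the form
\[
\frac{d}{dt}\int_\Omega u_\varepsilon^p + c_1 \int_\Omega u_\varepsilon^{p-2}|\nabla u_\varepsilon|^2 \leq C_1(\varepsilon,p)\int_\Omega u_\varepsilon^{p-2}|\nabla v_\varepsilon|^2.
\]
To close this estimate, higher integrability of $\nabla v_\varepsilon$ is needed. Viewing the second equation as $v_{\varepsilon t} - \Delta v_\varepsilon = -F_\varepsilon(u_\varepsilon) v_\varepsilon$, the $L^\infty$ bound (\ref{maximum principle}) together with $F_\varepsilon(u_\varepsilon) \leq u_\varepsilon$ and the space-time $L^{(n+2)/n}$ bound on $u_\varepsilon$ from Corollary \ref{3.4} allow one to invoke standard smoothing estimates for the Neumann heat semigroup (or maximal Sobolev regularity) to obtain an $L^\infty((0,T_{max,\varepsilon});W^{1,r}(\Omega))$ estimate on $v_\varepsilon$ for some $r>2$.

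Feeding this improved control of $\nabla v_\varepsilon$ back into the above differential inequality, a bootstrap along an increasing sequence of exponents $p$ yields $u_\varepsilon \in L^\infty((0,T_{max,\varepsilon});L^p(\Omega))$ for arbitrarily large $p<\infty$. A Moser-type iteration applied to the first equation (again crucially using $u_\varepsilon F_\varepsilon'(u_\varepsilon) \leq 1/\varepsilon$) then yields $\|u_\varepsilon(\cdot,t)\|_{L^\infty(\Omega)} \leq C(\varepsilon, T_{max,\varepsilon})$ uniformly on $(0,T_{max,\varepsilon})$. Inserting this into the $v_\varepsilon$ equation and applying $W^{1,q}$ heat-semigroup smoothing with the assumption $v_0 \in W^{1,q}(\Omega)$ finally gives $\|v_\varepsilon(\cdot,t)\|_{W^{1,q}(\Omega)} \leq C(\varepsilon,T_{max,\varepsilon})$, contradicting (\ref{criterion}) and forcing $T_{max,\varepsilon}=\infty$.

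The main obstacle is the first bootstrap step, where the exponents must be chosen so that the right-hand side $\int_\Omega u_\varepsilon^{p-2}|\nabla v_\varepsilon|^2$ is controlled by what is already available from the energy estimate and the current $L^p$ bound on $u_\varepsilon$; Gagliardo--Nirenberg interpolation and the heat-semigroup smoothing for $v_\varepsilon$ must balance so that the loop actually closes. This is exactly where the bound $u_\varepsilon F_\varepsilon'(u_\varepsilon) \leq 1/\varepsilon$ is decisive, since it replaces the genuine cross-diffusion by a sublinearly growing drift at large $u_\varepsilon$; without this degeneration the same scheme would not close, which is precisely the reason why renormalized solutions, rather than classical or weak ones, are used in the limit $\varepsilon\searrow 0$.
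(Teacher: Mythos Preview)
Your strategy is sound in spirit but takes a much longer route than the paper, and one step as written does not quite close.

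The paper avoids $L^p$ testing, bootstrapping, and Moser iteration altogether by exploiting a second $\varepsilon$-dependent gain you did not use: since $F_\varepsilon(s)=\frac1\varepsilon\ln(1+\varepsilon s)$ grows only logarithmically, one has $F_\varepsilon(s)\le \frac{p}{\varepsilon e}(1+\varepsilon s)^{1/p}$ for every $p\ge 1$, so mass conservation~(\ref{mass conservation}) alone gives $F_\varepsilon(u_\varepsilon)v_\varepsilon\in L^\infty_t L^p_x$ for \emph{any} $p<\infty$. Choosing $p>\frac{nq}{n+q}$, a single heat-semigroup smoothing step on the $v_\varepsilon$-equation yields $\nabla v_\varepsilon\in L^\infty_t L^q_x$ directly, with the prescribed $q>n$. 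Then, using $u_\varepsilon F_\varepsilon'(u_\varepsilon)\le 1/\varepsilon$, the drift $u_\varepsilon F_\varepsilon'(u_\varepsilon)\nabla v_\varepsilon$ lies in $L^\infty_t L^q_x$, and one further semigroup step (via a fractional power $B^\beta$ with $\beta\in(\frac{n}{2q},\frac12)$) gives $u_\varepsilon\in L^\infty_{t,x}$. This is two semigroup estimates and no iteration.

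Your plan, by contrast, starts a genuine bootstrap from Corollary~\ref{3.4}. Here is where the quantitative issue lies: from $F_\varepsilon(u_\varepsilon)v_\varepsilon\in L^{(n+2)/n}(\Omega\times(0,T))$, heat-semigroup or maximal-regularity estimates do \emph{not} produce $\nabla v_\varepsilon\in L^\infty_t L^r_x$ with $r>2$ once $n\ge 5$ (the admissible exponent one obtains satisfies $r<\frac{n}{n-1}\le \frac54$). The fix is easy but should be stated: begin the $L^p$ testing at $p=2$, where the right-hand side is just $\int_\Omega|\nabla v_\varepsilon|^2$, bounded by Corollary~\ref{cor3.2}; this gives $u_\varepsilon\in L^\infty_t L^2_x$, hence $F_\varepsilon(u_\varepsilon)v_\varepsilon\in L^\infty_t L^2_x$, and then the semigroup does deliver $\nabla v_\varepsilon\in L^\infty_t L^r_x$ for all $r<\frac{2n}{n-2}$, after which your bootstrap and Moser iteration go through. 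So your argument can be made to work, but the paper's two-line use of the sublinear growth of $F_\varepsilon$ replaces all of it.
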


\begin{proof} Assume that $T_{max,\varepsilon}<\infty$ for some $\varepsilon\in(0,1)$. Now pick $p>\frac{nq}{n+q}$ ensures that $\frac12-\frac n2(\frac1p-\frac1q)>0$. Moreover,
$$F_\varepsilon(s)\leq \frac{p}{\varepsilon e}(1+\varepsilon s)^{\frac1p}\ \ \mathrm{for\ all}\  s\geq0.$$
This entails that
\begin{equation}
\|F_\varepsilon(u_\varepsilon)v_\varepsilon\|_{L^p(\Omega)}\leq c_1\ \ \mathrm{for\ all}\ t\in(\frac12T_{max,\varepsilon},T_{max,\varepsilon})\label{4.1}
\end{equation}
because of (\ref{mass conservation}) and (\ref{maximum principle}). As a consequence of (\ref{4.1}), the variation-of-constants formula and well-known smoothing estimates  for the Neumann heat semigroup \cite[Lemma 1.3]{Winkler-JDE-2010} yield the estimate
\begin{eqnarray}
  \|\nabla v_\varepsilon(\cdot,t)\|_{L^q(\Omega)}
  &\leq& \|\nabla e^{t\Delta}v_\varepsilon\big(\frac12T_{max,\varepsilon}\big)\|_{L^q(\Omega)}+\int_{\frac12T_{max,\varepsilon}}^t \|\nabla e^{(t-s)\Delta}F_\varepsilon(u_\varepsilon)v_\varepsilon\|_{L^q(\Omega)}ds \nonumber\\
  &\leq& c_2\left(1+\int_{\frac12T_{max,\varepsilon}}^t \left(1+(t-s)^{-\frac12-\frac n2(\frac1p-\frac1q)}\right)\|F_\varepsilon(u_\varepsilon)v_\varepsilon\|_{L^p(\Omega)}ds\right)\nonumber\\
  &\leq &c_3 \ \ \mathrm{for\ all}\  t\in(\frac34T_{max,\varepsilon},T_{max,\varepsilon})\label{4.2}
\end{eqnarray}
with certain positive constants $c_2$ and $c_3$.

We next use (\ref{4.2}) to estimate $\|u_\varepsilon\|_{L^\infty(\Omega)}$. Now taking any $\beta\in\big(\frac{n}{2q},\frac12\big)$ and letting $B$ denote the operator $-\Delta+1$ in $L^q(\Omega)$ with homogeneous Neumann data, we have $D(B^\beta)\hookrightarrow L^\infty(\Omega)$ (see for example \cite{Horstmann&Winkler-JDE-2005}) and hence we find positive constants $c_4,c_5$ and $c_6$ such that
\begin{eqnarray}
  \|u_\varepsilon(\cdot,t)\|_{L^\infty(\Omega)}
  &\leq& \|u_\varepsilon(\cdot,\frac34T_{max,\varepsilon})\|_{L^\infty(\Omega)}+c_4\int_{\frac34T_{max,\varepsilon}}^t \|B^\beta e^{-(t-s)(B-1)}\nabla\cdot\big(u_\varepsilon F_\varepsilon'(u_\varepsilon)\nabla v_\varepsilon\big)\|_{L^q(\Omega)}ds\nonumber \\
  &\leq& c_5\left(1+\int_{\frac34T_{max,\varepsilon}}^t (t-s)^{-\beta-\frac12}\big\|u_\varepsilon F_\varepsilon'(u_\varepsilon)\nabla v_\varepsilon\big\|_{L^q(\Omega)}ds\right)\nonumber \\
  &\leq& c_6 \ \ \mathrm{for\ all}\ t\in(\frac78T_{max,\varepsilon},T_{max,\varepsilon}).
\end{eqnarray}
Combined with (\ref{4.2}), this contradicts (\ref{criterion}) and thereby proves that $T_{max,\varepsilon}=\infty$.
\end{proof}

\section{Existence of renormalized solutions}
Having established the existence of solutions for our approximate problem, we turn to the proof of the existence of renormalized solutions to the original equations (\ref{C}). Before going into detail, let us first give the definition of renormalized solutions.

\begin{defn}\label{defnaaaa}
Suppose that $n\geq1$, that $\Omega\subset\mathbb{R}^n$ is a bounded domain and that $u_0\in L^1(\Omega)$ and $v_0\in L^1(\Omega)$ are nonnegative. Then a pair $(u,v)$ of functions
\begin{eqnarray*}
&&u\in L_{\rm{loc}}^1(\overline{\Omega}\times[0,\infty)),\\
&&v\in L_{\rm{loc}}^\infty(\overline{\Omega}\times[0,\infty)),
\end{eqnarray*}
satisfying $u\geq0$ and $v\geq0$ a.e. in $\Omega\times(0,\infty)$, will be called a global renormalized solution of (\ref{C}) if for all $\xi\in C^\infty([0,\infty))$ with $\xi'\in C_0^\infty([0,\infty))$ we have

\begin{align}\label{renormalized solution} \nonumber
  -\int_0^\infty\int_\Omega \xi(u)\psi_t -\int_\Omega \xi(u_0)\psi(\cdot,0)=
  &-\int_0^\infty\int_\Omega \xi''(u)|\nabla u|^2\psi -\int_0^\infty\int_\Omega \xi'\nabla u\cdot\nabla\psi\\
  &+\int_0^\infty\int_\Omega u\xi''(u)(\nabla u\cdot\nabla v)\psi +\int_0^\infty\int_\Omega u\xi'(u)\nabla v\cdot\nabla\psi
\end{align}
for all $\psi\in C_0^\infty(\overline{\Omega}\times[0,\infty))$, and if moreover the identity

\begin{equation}\label{weak solution}
  \int_0^\infty\int_\Omega v\psi_t+\int_\Omega v_0\psi(\cdot,0)=\int_0^\infty\int_\Omega \nabla v\cdot\nabla\psi+\int_0^\infty\int_\Omega uv\psi
\end{equation}
is valid for any $\psi\in C_0^\infty(\overline{\Omega}\times[0,\infty))$.
\end{defn}

In the first step, we show that a subsequence of the solutions $u_\varepsilon$ to the approximate problems (\ref{regularized problem}) converges to some limit $u$ as $\varepsilon\rightarrow0$.

\begin{lem}\label{limit u}
Consider a sequence $u_\varepsilon$ of solutions to the approximate problems, with $\varepsilon$ converging to zero. Then there exists a subsequence $u_\varepsilon$ (not relabeled) which converges almost everywhere on $\Omega\times[0,\infty)$ to some limit $u\in L_{\rm{loc}}^\infty([0,\infty);L^1(\Omega))$ with $u|\log u|\in L_{\rm{loc}}^\infty([0,\infty);L^1(\Omega))$. Moreover, the convergence $\sqrt{u_\varepsilon}\rightharpoonup\sqrt{u}$ weakly in $L^2([0,T],H^1(\Omega))$ holds for all $T>0$.
\end{lem}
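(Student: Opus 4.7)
The plan is a standard Aubin-Lions based compactness argument; the only delicate point is a uniform bound on the cross-diffusive flux. Fix $T>0$ and note that all constants below will be $\varepsilon$-independent. From Corollary \ref{cor3.2}, the identity $|\nabla u_\varepsilon|^2/u_\varepsilon = 4|\nabla\sqrt{u_\varepsilon}|^2$ and mass conservation (\ref{mass conservation}) give
\begin{equation*}
\|\sqrt{u_\varepsilon}\|_{L^\infty(0,T;L^2(\Omega))}+\|\sqrt{u_\varepsilon}\|_{L^2(0,T;H^1(\Omega))}\leq C,
\end{equation*}
whence by Cauchy-Schwarz
\begin{equation*}
\int_0^T \|\nabla u_\varepsilon\|_{L^1(\Omega)}^2\,dt \leq 4\int_0^T\|\sqrt{u_\varepsilon}\|_{L^2(\Omega)}^2\,\|\nabla\sqrt{u_\varepsilon}\|_{L^2(\Omega)}^2\,dt\leq C.
\end{equation*}

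Next I would estimate the chemotactic flux $u_\varepsilon F_\varepsilon'(u_\varepsilon)\nabla v_\varepsilon$ in $L^1(\Omega\times(0,T))$. The key observation is the elementary inequality $\frac{s}{1+\varepsilon s}\leq \frac{1}{\varepsilon}\ln(1+\varepsilon s)$, i.e.\ $sF_\varepsilon'(s)\leq F_\varepsilon(s)$, together with $F_\varepsilon'\leq 1$ from (\ref{eq-F-eq1}). Cauchy-Schwarz in space gives
\begin{equation*}
\int_\Omega u_\varepsilon F_\varepsilon'(u_\varepsilon)|\nabla v_\varepsilon|\leq \Big(\int_\Omega u_\varepsilon\Big)^{1/2}\Big(\int_\Omega u_\varepsilon F_\varepsilon'(u_\varepsilon)|\nabla v_\varepsilon|^2\Big)^{1/2}\leq \Big(\int_\Omega u_0\Big)^{1/2}\Big(\int_\Omega F_\varepsilon(u_\varepsilon)|\nabla v_\varepsilon|^2\Big)^{1/2},
\end{equation*}
and a further Cauchy-Schwarz in time combined with Corollary \ref{cor3.2} yields $\|u_\varepsilon F_\varepsilon'(u_\varepsilon)\nabla v_\varepsilon\|_{L^1(\Omega\times(0,T))}\leq C$. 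Testing the first equation of (\ref{regularized problem}) against any $\psi\in W^{1,\infty}(\Omega)$ and using the homogeneous Neumann conditions on $u_\varepsilon$ and $v_\varepsilon$ to eliminate boundary terms, one deduces $\|\partial_t u_\varepsilon\|_{L^1(0,T;(W^{1,\infty}(\Omega))^*)}\leq C$.

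Combining $u_\varepsilon\in L^2(0,T;W^{1,1}(\Omega))$ with $\partial_t u_\varepsilon\in L^1(0,T;(W^{1,\infty}(\Omega))^*)$ and the compact embedding $W^{1,1}(\Omega)\hookrightarrow\hookrightarrow L^1(\Omega)$, the Aubin-Lions-Simon lemma produces a subsequence converging strongly in $L^1(\Omega\times(0,T))$, hence after a further extraction almost everywhere on $\Omega\times(0,T)$. A standard diagonal procedure over $T=1,2,\ldots$ yields a single subsequence converging a.e.\ on $\Omega\times[0,\infty)$ to a nonnegative measurable function $u$. Fatou's lemma applied slice-by-slice in time, together with the uniform bound on $\int_\Omega u_\varepsilon|\log u_\varepsilon|$ from Corollary \ref{cor3.2}, gives $u,\,u|\log u|\in L^\infty_{\mathrm{loc}}([0,\infty);L^1(\Omega))$. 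For the weak convergence assertion, the $L^2(0,T;H^1(\Omega))$ bound on $\sqrt{u_\varepsilon}$ forces weak compactness: every subsubsequence contains a further subsubsequence converging weakly in this space, whose weak limit must agree with the a.e.\ limit $\sqrt{u}$, so $\sqrt{u_\varepsilon}\rightharpoonup\sqrt{u}$ along the full chosen subsequence by the Urysohn subsequence principle.

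The main technical hurdle is precisely the $L^1(Q_T)$ bound on the cross-diffusive flux: for $n\geq 4$ Corollary \ref{3.4} does not place $u_\varepsilon$ in $L^2(\Omega)$, so naive Hölder pairing with $\nabla v_\varepsilon\in L^\infty(0,T;L^2(\Omega))$ fails, and one must exploit the pointwise inequality $sF_\varepsilon'(s)\leq F_\varepsilon(s)$ to couple this flux to the dissipative quantity $F_\varepsilon(u_\varepsilon)|\nabla v_\varepsilon|^2$ controlled by the energy inequality (\ref{energy inequality}).
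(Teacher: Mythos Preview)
Your argument is correct and takes a genuinely different, more direct route than the paper. The paper never bounds the untruncated flux $u_\varepsilon F_\varepsilon'(u_\varepsilon)\nabla v_\varepsilon$ in $L^1$; instead it works with the truncations $\varphi_E(u_\varepsilon)$, tests the equation against $\psi\,\varphi_E'(u_\varepsilon)$, and uses the compactness of $\mathrm{supp}\,D\varphi_E$ to make every term in the resulting identity (the one labeled (\ref{time derivative})) controllable by the energy bounds. Aubin--Lions is then applied to $\varphi_E(u_\varepsilon)$, producing limits $w_E$ for each fixed $E$, and $u$ is reconstructed by showing that $w_E$ stabilizes as $E\to\infty$; pointwise convergence of $u_\varepsilon$ to $u$ is recovered only at the end, via convergence in measure.

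Your shortcut is the pointwise inequality $sF_\varepsilon'(s)\le F_\varepsilon(s)$ (equivalently $\frac{t}{1+t}\le\ln(1+t)$ for $t\ge 0$), which together with $F_\varepsilon'\le 1$ lets you couple the cross-diffusive flux to the dissipated quantity $\int_0^T\!\int_\Omega F_\varepsilon(u_\varepsilon)|\nabla v_\varepsilon|^2$ from Corollary~\ref{cor3.2}. This yields $\partial_t u_\varepsilon\in L^1(0,T;(W^{1,\infty})^*)$ uniformly in~$\varepsilon$ and allows Aubin--Lions--Simon to be applied to $u_\varepsilon$ itself, avoiding the two-level limiting procedure. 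The Fatou and Urysohn arguments at the end are fine. What your approach buys is brevity for this particular lemma; what the paper's approach buys is that the identity (\ref{time derivative}) is exactly the object needed in the next lemma to derive the equation (\ref{varphi}) for $\varphi_E(u)$, so the truncation machinery has to be set up anyway and nothing is really saved in the global architecture of the proof.
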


\begin{proof}
Let $\varphi_E$ be as in (\ref{truncation function}). Noting that
$$\nabla[\varphi_E(u_\varepsilon)]=\varphi_E'(u_\varepsilon)\nabla u_\varepsilon$$
and that supp $\varphi_E'(v)$ is a compact subset of $\mathbb{R}_0^+$, by uniform (with respect to $\varepsilon$) boundedness of $\sqrt{u_\varepsilon}$ in $L^2([0,T];H^1(\Omega))$ for every $T>0$ [this is a consequence of Corollary \ref{cor3.2} and (\ref{mass conservation})] we see that $\varphi_E(u_\varepsilon)$ is uniformly bounded with respect to $\varepsilon$ in $L^2([0,T];H^1(\Omega))$ for every fixed $T>0$ and every fixed $E\in\mathbb{N}$.

Let $\psi\in C^\infty(\overline{\Omega}\times[0,\infty))$ be a smooth function. Testing the first equation of (\ref{regularized problem}) by $\psi\varphi_E'(u_\varepsilon)$ and integrating by parts, we have
\begin{align}\label{time derivative} \nonumber
&\int_\Omega \varphi_E(u_\varepsilon(\cdot,T))\psi(\cdot,T)-\int_\Omega \varphi_E(u_0)\psi(\cdot,0)-\int_0^T\int_\Omega \varphi_E(u_\varepsilon)\psi_t \nonumber\\
=&\int_0^T\int_\Omega \frac{d}{dt}\varphi_E(u_\varepsilon)\psi \nonumber\\
=&-\int_0^T\int_\Omega \varphi_E''(u_\varepsilon)|\nabla u_\varepsilon|^2\psi -\int_0^T\int_\Omega \varphi_E'(u_\varepsilon)\nabla u_\varepsilon\cdot\nabla\psi \nonumber\\
  &+\int_0^T\int_\Omega u_\varepsilon F_\varepsilon'(u_\varepsilon)\varphi_E''(u_\varepsilon)(\nabla u_\varepsilon\cdot\nabla v_\varepsilon)\psi \nonumber\\
  &+\int_0^T\int_\Omega u_\varepsilon F_\varepsilon'(u_\varepsilon)\varphi_E'(u_\varepsilon)\nabla v_\varepsilon\cdot\nabla\psi \nonumber\\
=:&I+II+III+IV.
\end{align}
Using the fact that supp $D\varphi_E$ is a compact subset of $\mathbb{R}_0^+$ and the fact that $\sqrt{u_\varepsilon}$ is uniformly (with respect to $\varepsilon$) bounded in $L^2([0,T];H^1(\Omega))$ for every $T>0$, we see that $\frac{d}{dt}\varphi_E(u_\varepsilon)$ is bounded uniformly in $L^1([0,T];(W^{1,\infty}(\Omega))')$ for every $T>0$ and every fixed $E$.

Since $H^1(\Omega)\hookrightarrow\hookrightarrow L^2(\Omega)$, we may combine this with the boundedness of $\left(\frac{d}{dt}\varphi_E(u_\varepsilon)\right)_{\varepsilon\in(0,1)}$ in $L^1([0,T];(W^{1,\infty}(\Omega))')$ to obtain from the Aubin-Lions Lemma (see for example \cite{Lions1969}) that the sequence $\varphi_E(u_\varepsilon)$ is relatively compact in $L^2([0,T];L^2(\Omega))$ for every fixed $T>0$ and fixed $E\in\mathbb{N}$.
By a diagonal sequence argument (we do not relabel the subsequence), we may assume that for every $E\in\mathbb{N}$ the sequence $(\varphi_E(u_\varepsilon))_\varepsilon$ converges almost everywhere to some measurable limit $w_E$.
From the uniform boundedness of $u_\varepsilon|\log u_\varepsilon|$ in $L^\infty([0,T];L^1(\Omega))$ which holds for every fixed $T>0$ [this is also a consequence of Corollary \ref{cor3.2}] we deduce using (E6) that $\varphi_E(u_\varepsilon)|\log\varphi_E(u_\varepsilon)|$ is also bounded uniformly in $L^\infty([0,T];L^1(\Omega))$ for every fixed $T>0$; moreover, the boundedness is also uniform with respect to $E$.
Thus, by Fatou's Lemma we know that $w_E$ is almost everywhere finite and $w_E|\log w_E|$ is bounded uniformly (with respect to $E$) in $L^\infty([0,T];L^1(\Omega))$.

We now prove that the pointwise limit $\lim_{E\rightarrow\infty}w_E$ exists almost everywhere and define a measurable function $u$ with $u|\log u|\in L^\infty([0,T];L^1(\Omega))$.
If for some $(x,t)$ and some $E$ we have $w_E(x,t)=\lim_{\varepsilon\rightarrow0}\varphi_E(u_\varepsilon(x,t))<E$, then $w_{\tilde{E}}(x,t)=w_E(x,t)$ holds for all $\tilde{E}>E$: by our choice of $\varphi_E$ we know that $\varphi_E(v)<E$ implies $\varphi_E(v)=v=\varphi_{\tilde{E}}(v)$.
If we have $w_E(x,t)<E$, then for $\varepsilon$ small enough it holds that $\varphi_E(u_\varepsilon(x,t))<E$ and therefore we get $w_E(x,t)=w_{\tilde{E}}(x,t)$ for $\tilde{E}>E$. Since $w_E$ is bounded uniformly in $L^\infty([0,T];L^1(\Omega))$, the measure of the set of points $(x,t)$ for which $w_E(x,t)\geq E$ holds tends to zero as $E\rightarrow\infty$; thus, the limit $\lim_{E\rightarrow\infty}w_E(x,t)$ exists for almost every $(x,t)\in\Omega\times[0,T]$ and defines a measurable function $u$.
The estimate $u|\log u|\in L^\infty([0,T];L^1(\Omega))$ is a consequence of Fatou's Lemma.

The function $u$ is now the natural candidate for being a renormalized solution of (\ref{C}).

First we notice that (after possibly passing to another subsequence) $u_\varepsilon$ converges almost everywhere to $u$. By uniform boundedness of $u_\varepsilon$ in $L^1(\Omega\times[0,T])$, the measure of the set of points $(x,t)$ with $u_\varepsilon(x,t)\geq E$ tends to zero as $E\rightarrow\infty$, uniformly in $\varepsilon$; thus the measure of the set of points $(x,t)$ for which $\varphi_E(u_\varepsilon(x,t))\neq u_\varepsilon(x,t)$ holds tends to zero as $E\rightarrow\infty$, uniformly in $\varepsilon$. We have for any $\delta>0$
\begin{align*}
 &\mathcal{L}^{n+1}\bigg(\bigg\{(x,t)\in\Omega\times[0,T]:|u_\varepsilon(x,t)-u(x,t)|>\delta\bigg\}\bigg)\\
 \,&\leq \mathcal{L}^{n+1}\bigg(\bigg\{(x,t)\in\Omega\times[0,T]:u_\varepsilon(x,t)\neq\varphi_E(u_\varepsilon)(x,t)\bigg\}\bigg)\\
  &\quad + \mathcal{L}^{n+1}\left(\left\{(x,t)\in\Omega\times[0,T]:|\varphi_E(u_\varepsilon)(x,t)-w_E(x,t)|>\frac\delta2\right\}\right)\\
   & \quad+ \mathcal{L}^{n+1}\left(\left\{(x,t)\in\Omega\times[0,T]:|w_E(x,t)-u(x,t)|>\frac{\delta}{2}\right\}\right),
\end{align*}
where by the previous considerations the first term on the right-hand side converges to zero as $E\rightarrow\infty$, uniformly in $\varepsilon>0$.
The last term tends to zero as $E\rightarrow\infty$ by the definition of $u$; it is independent of $\varepsilon$. The penultimate term converges to zero as $\varepsilon\rightarrow0$ for fixed $E$.
To sum up, we have shown that $u_\varepsilon$ converges to $u$ in measure, which implies convergence almost everywhere for a subsequence.

As $u_\varepsilon$ is bounded uniformly in $L^\infty([0,T];L^1(\Omega))$ for every $T>0$, we deduce that $u_\varepsilon$ converges to $u$ strongly in $L^p([0,T];L^1(\Omega))$ for every $T>0$ and $p\geq1$.
This, in particular, implies convergence of $\sqrt{u_\varepsilon}$ to $\sqrt{u}$ in the sense of distribution, and we obtain that $\sqrt{u}\in L^2([0,T];H^1(\Omega))$ with
\begin{equation*}
  \int_0^T\int_\Omega |\nabla\sqrt{u}|^2\leq\liminf_{\varepsilon\rightarrow0} \int_0^T\int_\Omega |\nabla\sqrt{u_\varepsilon}|^2
\end{equation*}
[the latter $\liminf$ being finite due to Corollary \ref{cor3.2}]. In particular, $\sqrt{u_\varepsilon}$ converges to $\sqrt{u}$ weakly in $L^2([0,T];H^1(\Omega))$ for every $T>0$.
\end{proof}

In the second step, as a preparation for the proof of Theorem \ref{main result}, we show that a subsequence of the solutions $v_\varepsilon$ to the approximate problems (\ref{regularized problem}) converges to some limit $v$ as $\varepsilon\rightarrow0$.

\begin{lem}\label{lem5.2}
Consider a sequence $v_\varepsilon$ of solutions to the approximate problems, with $\varepsilon$ converging to zero. Then there exists a subsequence $v_\varepsilon$ (not relabeled)which converges almost everywhere on $\Omega\times[0,\infty)$ to some limit $v\in L_{\rm{loc}}^\infty(\overline{\Omega}\times[0,\infty))$. Moreover, $v$ satisfies (\ref{weak solution}).
\end{lem}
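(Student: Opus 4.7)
The plan is to extract a strongly convergent subsequence of $(v_\varepsilon)$ by an Aubin--Lions compactness argument and then to pass to the limit in the weak formulation of the second equation of (\ref{regularized problem}). First, I would collect uniform bounds. From (\ref{maximum principle}) we already have $0\le v_\varepsilon\le\|v_0\|_{L^\infty(\Omega)}$, and Corollary \ref{cor3.2} provides an $L^\infty_{\mathrm{loc}}([0,\infty);L^2(\Omega))$ bound on $\nabla v_\varepsilon$; together these give $(v_\varepsilon)$ bounded in $L^2_{\mathrm{loc}}([0,\infty);W^{1,2}(\Omega))$. To control the time derivative I would test $v_{\varepsilon t}=\Delta v_\varepsilon-F_\varepsilon(u_\varepsilon)v_\varepsilon$ against $\phi\in W^{1,\infty}(\Omega)$ and use $F_\varepsilon(u_\varepsilon)\le u_\varepsilon$ together with (\ref{mass conservation}) and (\ref{maximum principle}) to obtain a uniform bound on $v_{\varepsilon t}$ in $L^\infty_{\mathrm{loc}}([0,\infty);(W^{1,\infty}(\Omega))')$.

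With the chain $W^{1,2}(\Omega)\hookrightarrow\hookrightarrow L^2(\Omega)\hookrightarrow(W^{1,\infty}(\Omega))'$, the Aubin--Lions lemma then yields a (non-relabeled) subsequence converging strongly in $L^2_{\mathrm{loc}}([0,\infty);L^2(\Omega))$ to some $v$, and a further subsequence converging almost everywhere on $\Omega\times[0,\infty)$. The limit $v$ inherits $0\le v\le\|v_0\|_{L^\infty(\Omega)}$ and hence lies in $L^\infty_{\mathrm{loc}}(\overline{\Omega}\times[0,\infty))$, while the gradient bound yields $\nabla v_\varepsilon\rightharpoonup\nabla v$ weakly in $L^2(\Omega\times[0,T])$ for every $T>0$.

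Next I would pass to the limit in the weak identity
\begin{equation*}
\int_0^\infty\!\!\int_\Omega v_\varepsilon\psi_t+\int_\Omega v_0\psi(\cdot,0)=\int_0^\infty\!\!\int_\Omega\nabla v_\varepsilon\cdot\nabla\psi+\int_0^\infty\!\!\int_\Omega F_\varepsilon(u_\varepsilon)v_\varepsilon\psi
\end{equation*}
obtained by testing the second equation of (\ref{regularized problem}) against $\psi\in C_0^\infty(\overline{\Omega}\times[0,\infty))$. The first and second terms on the left and the first term on the right pass to the limit by the strong $L^1_{\mathrm{loc}}$ convergence of $v_\varepsilon$ and the weak $L^2$ convergence of $\nabla v_\varepsilon$. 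For the reaction term, Lemma \ref{limit u} gives $u_\varepsilon\to u$ a.e., and (\ref{eq-F-eq3}) then implies $F_\varepsilon(u_\varepsilon)\to u$ a.e.; since $F_\varepsilon(u_\varepsilon)\le u_\varepsilon$ and $(u_\varepsilon)$ is bounded in $L^{(n+2)/n}(\Omega\times(0,T))$ by Corollary \ref{3.4}, Vitali's theorem yields $F_\varepsilon(u_\varepsilon)\to u$ in $L^1_{\mathrm{loc}}(\overline{\Omega}\times[0,\infty))$. Combined with the $L^\infty$-dominated a.e. convergence of $v_\varepsilon$ to $v$ (so that $v_\varepsilon\to v$ in every $L^p_{\mathrm{loc}}$, $p<\infty$), this gives $F_\varepsilon(u_\varepsilon)v_\varepsilon\to uv$ in $L^1_{\mathrm{loc}}$, which is enough to identify (\ref{weak solution}).

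The delicate point is precisely the reaction-term limit $F_\varepsilon(u_\varepsilon)v_\varepsilon\to uv$: $u_\varepsilon$ is controlled only by weak-type estimates, and without additional integrability one would be forced to replace (\ref{weak solution}) by a renormalized version. Here the subcritical $L^{(n+2)/n}$ estimate of Corollary \ref{3.4} is exactly what rules out concentration of $F_\varepsilon(u_\varepsilon)$ and allows Vitali's theorem to close the argument with the second equation retained in genuine weak form.
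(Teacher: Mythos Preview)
Your argument is correct but proceeds by a different, more elementary compactness route than the paper. The paper first bounds $F_\varepsilon(u_\varepsilon)v_\varepsilon$ in $L^{(n+2)/n}(\Omega\times(0,T))$ via Corollary~\ref{3.4} and then invokes maximal $L^p$ regularity for the heat equation to obtain uniform bounds on $(v_\varepsilon)$ in $L^{(n+2)/n}((0,T);W^{2,(n+2)/n}(\Omega))$ and on $(v_{\varepsilon t})$ in $L^{(n+2)/n}(\Omega\times(0,T))$; Aubin--Lions then yields relative compactness in $L^1([0,T];W^{1,1}(\Omega))$, so in particular $\nabla v_\varepsilon\to\nabla v$ \emph{strongly} in $L^1_{\mathrm{loc}}$. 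You bypass the maximal regularity machinery entirely, using only the energy estimate of Corollary~\ref{cor3.2} for $\nabla v_\varepsilon$ together with a crude duality bound on $v_{\varepsilon t}$, and obtain compactness only in $L^2_{\mathrm{loc}}([0,\infty);L^2(\Omega))$ with weak convergence of the gradients. For the purposes of Lemma~\ref{lem5.2} this is sufficient, since the gradient enters (\ref{weak solution}) linearly; your approach is thus lighter in prerequisites, while the paper's argument delivers the extra information that $\nabla v_\varepsilon$ converges strongly (and a.e.), which can be convenient in subsequent limit passages. The treatment of the nonlinear term $F_\varepsilon(u_\varepsilon)v_\varepsilon$ is effectively the same in both proofs: a.e.\ convergence combined with the uniform $L^{(n+2)/n}$ bound gives the required $L^1$ convergence, whether phrased via Vitali's theorem as you do or via weak convergence as in the paper.
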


\begin{proof}
Firstly, we show that $v_\varepsilon$ is strongly precompact in $L^1([0,T];W^{1,1}(\Omega))$. Then since $0\leq F_\varepsilon(u_\varepsilon)\leq u_\varepsilon$, in view of (\ref{maximum principle}) and Corollary \ref{3.4} we can pick positive constant $C$ such that

\begin{equation*}
  \int_0^T\int_\Omega |F_\varepsilon(u_\varepsilon)v_\varepsilon|^{\frac{n+2}{n}}\leq C\int_0^T\int_\Omega u_\varepsilon^{\frac{n+2}{n}}\leq C
\end{equation*}
for all $\varepsilon\in(0,1)$. This shows that $(F_\varepsilon(u_\varepsilon)v_\varepsilon)_{\varepsilon\in(0,1)}$ is bounded in $L^{\frac{n+2}{n}}(\Omega\times(0,T))$, so that standard results on Sobolev regularity for the heat equation \cite{Hieber-CPDE-1997} assert boundedness of both $(v_{\varepsilon t})_{\varepsilon\in(0,1)}$ in $L^{\frac{n+2}{n}}(\Omega\times(0,T))$ and of $(v_\varepsilon)_{\varepsilon\in(0,1)}$ in $L^{\frac{n+2}{n}}((0,T);W^{2,\frac{n+2}{n}}(\Omega))$.
Again by the Aubin-Lions lemma, this shows that $(v_\varepsilon)_{\varepsilon\in(0,1)}$ is relatively compact in $L^1([0,T];W^{1,1}(\Omega))$. It is possible to pick a sequence of numbers $(0,1)\ni\varepsilon_j\searrow0$ such that as $\varepsilon=\varepsilon_j\searrow0$, the solutions $v_\varepsilon$ of (\ref{regularized problem}) satisfy

$$v_\varepsilon\rightarrow v \quad\mathrm{in}\ L_{\rm{loc}}^1(\overline{\Omega}\times[0,\infty)) \ \ \mathrm{and\ a.e.\ in}\ \Omega\times(0,\infty),$$
$$\nabla v_\varepsilon\rightarrow \nabla v \quad\mathrm{in}\ L_{\rm{loc}}^1(\overline{\Omega}\times[0,\infty))\ \ \mathrm{and\ a.e.\ in}\ \Omega\times(0,\infty), $$
for some limit function $v$.
To see that $v$ satisfies (\ref{weak solution}), we fix $\psi\in C_0^\infty(\overline{\Omega}\times[0,\infty))$. Multiplying the second equation in (\ref{regularized problem}) by $\psi$, on integrating by parts we obtain

\begin{equation*}
  \int_0^\infty\int_\Omega v_\varepsilon\psi_t+\int_\Omega v_0\psi(\cdot,0)=\int_0^\infty\int_\Omega \nabla v_\varepsilon\cdot\nabla\psi+\int_0^\infty\int_\Omega F_\varepsilon(u_\varepsilon)v_\varepsilon\psi.
\end{equation*}
Combined with the boundedness of $F_\varepsilon(u_\varepsilon)v_\varepsilon$ in $L^{\frac{n+2}{n}}(\Omega\times(0,T))$, we derive that (\ref{weak solution}) by letting $\varepsilon\rightarrow0$ and thereby completes the proof.
\end{proof}

In the third step of our proof of the existence of renormalized solutions, we show that the "truncations" $\varphi_E(u)$ of the limit $u$, which has been constructed in the first step, satisfy a certain PDE.

\begin{lem}
Let $u$ be the functions constructed in the previous lemma. Let $\varphi_E$ be the functions defined in (\ref{truncation function}). Let $\psi\in C_0^\infty(\overline{\Omega}\times[0,\infty))$. Then $\varphi_E(u)$ satisfies

\begin{align}\label{varphi}
 -&\int_0^\infty\int_\Omega \varphi_E(u)\frac{d}{dt}\psi dxdt-\int_\Omega \varphi_E(u_0)\psi(\cdot,0)dx\nonumber\\
  =&-\int_{\overline{\Omega}\times[0,\infty)}\psi d\mu^E(x,t)\nonumber\\
    &-\int_0^\infty\int_\Omega \varphi_E'(u)\nabla u\cdot\nabla\psi dxdt\nonumber\\
    &+\int_0^\infty\int_\Omega u\varphi_E'(u)\nabla v\cdot\nabla\psi dxdt
 \end{align}
where $\mu^E$ denotes a sequence of signed Radon measures satisfying

\begin{equation}\label{Radon measure}
 \lim_{E\rightarrow\infty}|\mu^E|(\overline{\Omega}\times[0,T))=0
\end{equation}
for all $T>0$.
\end{lem}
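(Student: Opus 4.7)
My plan is to take the approximate identity (\ref{time derivative}) at fixed $E\in\mathbb{N}$, choose $T$ large enough that $\psi(\cdot,T)=0$ (which kills the boundary term $\int_\Omega\varphi_E(u_\varepsilon(\cdot,T))\psi(\cdot,T)$), and pass to $\varepsilon\to0$ along the subsequence produced in Lemmas \ref{limit u} and \ref{lem5.2}. The left-hand side together with the terms $II$ and $IV$ of (\ref{time derivative}) (those containing $\varphi_E'$, not $\varphi_E''$) will match their limits in (\ref{varphi}) by direct convergence arguments, while $I+III$ (those with $\varphi_E''$) have no classical limit and I will realize them as the pairing $-\int\psi\,d\mu^E$ against a signed Radon measure.

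For the easy terms: since $\varphi_E$ is continuous and globally bounded by $3E$, dominated convergence yields $\varphi_E(u_\varepsilon)\to\varphi_E(u)$ in every $L^p$ with $p<\infty$, handling the time-derivative term. For $II$, the compactness of $\mathrm{supp}\,\varphi_E'$ combined with Corollary \ref{cor3.2} renders $\varphi_E(u_\varepsilon)$ uniformly bounded in $L^2([0,T];H^1(\Omega))$; this and the established strong $L^2$-convergence force $\nabla\varphi_E(u_\varepsilon)\rightharpoonup\varphi_E'(u)\nabla u$ weakly in $L^2$. For $IV$, the prefactor $u_\varepsilon F_\varepsilon'(u_\varepsilon)\varphi_E'(u_\varepsilon)$ is uniformly bounded on $\mathrm{supp}\,\varphi_E'$ and, by (E4) together with $F_\varepsilon'\to1$, converges a.e.\ to $u\varphi_E'(u)$; coupled with the weak $L^2$-convergence of $\nabla v_\varepsilon$ supplied by Corollary \ref{cor3.2} and Lemma \ref{lem5.2}, this yields the last term on the right of (\ref{varphi}).

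To construct $\mu^E$, I define $d\mu_\varepsilon^E:=[\varphi_E''(u_\varepsilon)|\nabla u_\varepsilon|^2-u_\varepsilon F_\varepsilon'(u_\varepsilon)\varphi_E''(u_\varepsilon)\nabla u_\varepsilon\cdot\nabla v_\varepsilon]\,dx\,dt$, so that $I+III=-\int\psi\,d\mu_\varepsilon^E$. The bound $u_\varepsilon|\varphi_E''(u_\varepsilon)|\leq K_1$ from (E2) dominates the first summand pointwise by $K_1|\nabla u_\varepsilon|^2/u_\varepsilon$, while the elementary inequality $\log(1+t)\geq t/(1+t)$ gives $u_\varepsilon F_\varepsilon'(u_\varepsilon)\leq F_\varepsilon(u_\varepsilon)$, so Cauchy--Schwarz combined with Corollary \ref{cor3.2} bounds the second summand uniformly in $\varepsilon$ and $E$. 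Hence $|\mu_\varepsilon^E|(\overline{\Omega}\times[0,T])\leq C_T$ uniformly, and Banach--Alaoglu plus a diagonal extraction produces a subsequence along which $\mu_\varepsilon^E\rightharpoonup\mu^E$ weakly-$*$ as Radon measures simultaneously for every $E\in\mathbb{N}$; plugging this into the $\varepsilon$-level identity yields (\ref{varphi}).

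The crux is verifying (\ref{Radon measure}). By lower semicontinuity of the total variation under weak-$*$ convergence it suffices to show $\liminf_{\varepsilon}|\mu_\varepsilon^E|(\overline{\Omega}\times[0,T])\to0$ as $E\to\infty$. I will split the integrand defining $|\mu_\varepsilon^E|$ via $\mathbf{1}_{\{u_\varepsilon\leq K\}}+\mathbf{1}_{\{u_\varepsilon>K\}}$ for a parameter $K$: on $\{u_\varepsilon\leq K\}$, property (E7) gives $\sup_{v\leq K}|\varphi_E''(v)|\to0$, so that piece vanishes as $E\to\infty$ for each fixed $K$; on $\{u_\varepsilon>K\}$ I exploit $\mathrm{supp}\,\varphi_E''\subset[E,2E]$ together with the $L\log L$ bound of Corollary \ref{cor3.2} and the $L^{(n+2)/n}$-integrability from Corollary \ref{3.4} to control the smallness of this set uniformly in $\varepsilon$. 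Sending $E\to\infty$ first and then $K\to\infty$ will close the argument. The genuinely delicate step is exactly this last one: because $|\nabla u_\varepsilon|^2/u_\varepsilon$ and $F_\varepsilon(u_\varepsilon)|\nabla v_\varepsilon|^2$ are only a priori bounded in $L^1$, obtaining uniform (in $\varepsilon$) smallness of their integrals on the thin slab $\{K<u_\varepsilon\leq 2E\}$ requires interpolating these $L^1$ estimates against the higher integrability $u_\varepsilon\in L^{(n+2)/n}$ furnished by Corollary \ref{3.4}, and this is where I expect the main technical effort of the proof to lie.
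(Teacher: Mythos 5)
Your setup---passing to the limit in (\ref{time derivative}) for fixed $E$, treating the left-hand side and the terms $II$, $IV$ by direct convergence, absorbing the two $\varphi_E''$-terms into signed measures $\mu_\varepsilon^E$ whose total variation is controlled via (E2), the inequality $sF_\varepsilon'(s)\le F_\varepsilon(s)$, Young/Cauchy--Schwarz and Corollary \ref{cor3.2}, and then extracting a weak-$*$ limit $\mu^E$---coincides with the paper's argument. The gap sits exactly where you locate the ``main technical effort'': the proof of (\ref{Radon measure}). Your splitting $\mathbf{1}_{\{u_\varepsilon\le K\}}+\mathbf{1}_{\{u_\varepsilon>K\}}$ requires, on the high set, that $\sup_{\varepsilon}\int_0^T\!\int_{\{u_\varepsilon>K\}}|\nabla\sqrt{u_\varepsilon}|^2$ and $\sup_{\varepsilon}\int_0^T\!\int_{\{u_\varepsilon>K\}}F_\varepsilon(u_\varepsilon)|\nabla v_\varepsilon|^2$ become small as $K\to\infty$; this is an equi-integrability statement, relative to the sets $\{u_\varepsilon>K\}$, for two families that are only known to be bounded in $L^1(\Omega\times(0,T))$. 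The higher integrability $u_\varepsilon\in L^{(n+2)/n}$ from Corollary \ref{3.4} makes the \emph{measure} of $\{u_\varepsilon>K\}$ small uniformly in $\varepsilon$, but it says nothing about where the mass of $|\nabla\sqrt{u_\varepsilon}|^2$ or of $F_\varepsilon(u_\varepsilon)|\nabla v_\varepsilon|^2$ concentrates, so no interpolation of the kind you sketch can deliver the required uniform-in-$\varepsilon$ smallness. As stated, this step cannot be closed with the estimates available.

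The paper circumvents the need for any equi-integrability by reversing the order of the limits. It slices by level sets, setting $\nu_\varepsilon^K:=\chi_{\{u_\varepsilon\in[K-1,K)\}}|\nabla\sqrt{u_\varepsilon}|^2\,dx\,dt$ and $\gamma_\varepsilon^K:=\chi_{\{u_\varepsilon\in[K-1,K)\}}F_\varepsilon(u_\varepsilon)|\nabla v_\varepsilon|^2\,dx\,dt$, and bounds $|\mu_\varepsilon^E|(\overline{\Omega}\times[0,T))$ by $C\sum_{K}\bigl(\nu_\varepsilon^K+\gamma_\varepsilon^K\bigr)(\overline{\Omega}\times[0,T))\cdot\sup_{v\in[K-1,K)}v|\varphi_E''(v)|$. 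For fixed $E$ only finitely many $K$ contribute by (E3), so one can pass $\varepsilon\to0$ termwise (after a further diagonal subsequence ensuring all limits $\lim_\varepsilon\nu_\varepsilon^K$, $\lim_\varepsilon\gamma_\varepsilon^K$ exist); Fatou's lemma for the counting measure on $\mathbb{N}$ shows the limiting sequences are summable in $K$ because $\sum_K\nu_\varepsilon^K$ and $\sum_K\gamma_\varepsilon^K$ are uniformly bounded; and the limit $E\to\infty$ is then handled by dominated convergence over $K$, with (E2) providing the dominating bound and (E7) the pointwise-in-$K$ vanishing of $\sup_{v\in[K-1,K)}v|\varphi_E''(v)|$. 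You should restructure your final step along these lines: taking $E\to\infty$ before $K\to\infty$ with a two-set splitting, as you propose, does not work.
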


\begin{proof}
Let $T>0$ and $\psi\in C_0^\infty(\overline{\Omega}\times[0,T))$. For fixed $E\in\mathbb{N}$ we pass to the limit $\varepsilon\rightarrow0$ in (\ref{time derivative}). Convergence of the left-hand side and of the terms $II, IV$ is immediate by the convergence properties proven in previous lemma and by the fact that supp $D\varphi_E$ is compact.

Two terms whose convergence cannot be ensured are term $I, III$. In order to deal with them, we intend to show that they vanish in the limit $E\rightarrow\infty$. Consider the signed measures

\begin{align}\label{Radon measure definition}
  \mu_\varepsilon^E&:= \left(\varphi_E''(u_\varepsilon)|\nabla u_\varepsilon|^2-u_\varepsilon F_\varepsilon'(u_\varepsilon)\varphi_E''(u_\varepsilon)\nabla u_\varepsilon\cdot\nabla v_\varepsilon\right)dxdt\nonumber \\[5pt]
    &\ = \left(4u_\varepsilon\varphi_E''(u_\varepsilon)|\nabla \sqrt{ u_\varepsilon} |^2-2u_\varepsilon^{\frac32}F_\varepsilon'(u_\varepsilon)\varphi_E''(u_\varepsilon)\nabla\sqrt{u_\varepsilon}\cdot\nabla v_\varepsilon\right)dxdt.
\end{align}
Note that we have
\begin{equation*}
 | \mu_\varepsilon^E|(\overline{\Omega}\times[0,T))\leq C\int_0^T\int_\Omega |\nabla \sqrt{ u_\varepsilon} |^2dxdt+\int_0^T\int_\Omega F_\varepsilon(u_\varepsilon)|\nabla v_\varepsilon|^2dxdt,
\end{equation*}
which follows from the definition of $\mu_\varepsilon^E$ using (E2) and Corollary \ref{cor3.2} as well as Young's inequality. The uniform boundedness of $\sqrt{u_\varepsilon}$ in $L^2([0,T];H^1(\Omega)$ for any $T>0$ implies that after passing to a subsequence we may assume that $\mu_\varepsilon^E$ weak-$\ast$ converges on $\overline{\Omega}\times[0,\infty)$ to some limit $\mu^E$ as $\varepsilon$ tends to $0$.

It remains to prove (\ref{Radon measure}). We now consider the measures
\begin{align*}
  \nu_\varepsilon^K &:= \chi_{\{|u_\varepsilon|\in[K-1,K)\}} |\nabla \sqrt{ u_\varepsilon} |^2 dxdt \\
  \gamma_\varepsilon^K&:= \chi_{\{|u_\varepsilon|\in[K-1,K)}\} F_\varepsilon(u_\varepsilon)|\nabla v_\varepsilon|^2 dxdt
\end{align*}
on $\overline{\Omega}\times[0,\infty)$. Using (E2) and Corollary \ref{cor3.2} we deduce from (\ref{Radon measure definition}) that
\begin{align}\nonumber
  |\mu_\varepsilon^E| (\overline{\Omega}\times[0.T))
  \leq &C\sum_{K=1}^\infty\int_0^T\int_\Omega \chi_{\{|u_\varepsilon|\in[K-1,K)\}}u_\varepsilon|\varphi_E''(u_\varepsilon)| |\nabla \sqrt{ u_\varepsilon}|^2 dxdt\nonumber\\
     &+C\sum_{K=1}^\infty\int_0^T\int_\Omega \chi_{\{|u_\varepsilon|\in[K-1,K)\}}u_\varepsilon|\varphi_E''(u_\varepsilon)|F_\varepsilon(u_\varepsilon)|\nabla v_\varepsilon|^2 dxdt\nonumber\\
  \leq &C \sum_{K=1}^\infty \nu_\varepsilon^K(\overline{\Omega}\times[0,T))\cdot\sup_{|v|\in[K-1,K)} v|\varphi_E''(v)|\nonumber\\
     &+C \sum_{K=1}^\infty \gamma_\varepsilon^K(\overline{\Omega}\times[0,T))\cdot\sup_{|v|\in[K-1,K)} v|\varphi_E''(v)|\nonumber
\end{align}
By (E3), for fixed $E\in\mathbb{N}$ only finitely many terms in the series do not vanish. We may therefore pass to the limit $\varepsilon\rightarrow0$. Using the fact that the measure of open sets is lower semicontinuous with respect to weak-$\ast$ convergence of measures, we obtain, after passing to a subsequence (the passage to a subsequence in particular ensuring that the limits in the last line of the next formula exist),
\begin{align}\label{epsilon} \nonumber
    |\mu^E|(\overline{\Omega}\times[0,T))&\leq\liminf_{\varepsilon\rightarrow0}|\mu_\varepsilon^E|(\overline{\Omega}\times[0,T)) \nonumber\\
   &\leq C\sum_{K=1}^\infty \lim_{\varepsilon\rightarrow0}\nu_\varepsilon^K(\overline{\Omega}\times[0,T))\cdot\sup_{|v|\in[K-1,K)} v|\varphi_E''(v)|\nonumber\\
   &\ \ \ +C\sum_{K=1}^\infty \lim_{\varepsilon\rightarrow0}\gamma_\varepsilon^K(\overline{\Omega}\times[0,T))\cdot\sup_{|v|\in[K-1,K)} v|\varphi_E''(v)|.
\end{align}
However, we have
\begin{align*}
  \sum_{K=1}^\infty\nu_\varepsilon^K(\overline{\Omega}\times[0,T))&=\int_0^T\int_\Omega |\nabla \sqrt{ u_\varepsilon} |^2 dxdt\\
 \sum_{K=1}^\infty\gamma_\varepsilon^K(\overline{\Omega}\times[0,T))&=\int_0^T\int_\Omega F_\varepsilon(u_\varepsilon)|\nabla v_\varepsilon|^2 dxdt.
\end{align*}
As the latter quantities are bounded uniformly with respect to $\varepsilon$, we obtain, using Fatou's lemma (for the counting measure on $\mathbb{N}$; recall that the limits in the next formula actually exist since we have passed to an appropriate subsequence),
\begin{align*}
   \sum_{K=1}^\infty \lim_{\varepsilon\rightarrow0}\nu_\varepsilon^K(\overline{\Omega}\times[0,T))&< \infty \\
  \sum_{K=1}^\infty \lim_{\varepsilon\rightarrow0}\gamma_\varepsilon^K(\overline{\Omega}\times[0,T))&< \infty .
\end{align*}
By dominated convergence applied to the counting measure on $\mathbb{N}$ (which is possible by (E2) and (E7) as well as the previous estimate), we deduce from (\ref{epsilon})
\begin{align*}
\limsup_{E\rightarrow\infty}|\mu^E|(\overline{\Omega}\times[0,T))
&\leq C \sum_{K=1}^\infty \lim_{\varepsilon\rightarrow0}\nu_\varepsilon^K(\overline{\Omega}\times[0,T))\lim_{E\rightarrow\infty}\sup_{|v|\in[K-1,K)} v|\varphi_E''(v)|\\
&\quad+C \sum_{K=1}^\infty \lim_{\varepsilon\rightarrow0}\gamma_\varepsilon^K(\overline{\Omega}\times[0,T))\lim_{E\rightarrow\infty}\sup_{|v|\in[K-1,K)} v|\varphi_E''(v)|\\
&=0.
\end{align*}
This finishes the proof of the lemma.
\end{proof}
We can now prove our main result.

\noindent{\emph{Proof of Theorem \ref{main result}.}}
In order to show that $u$ is a renormalized solution, we apply \cite[Lemma 4]{Fischer-ARMA-2015} to the map $v:=\varphi_E(u)$ in order to approximately identify the weak time derivative of $\xi(\varphi_E(u))$; then we pass to the limit $E\rightarrow\infty$ to deduce the equation for $\xi(u)$.

More precisely, choose some $T>0$ as arbitrary but fixed; we then prove that $u$ is a renormalized solution on $[0,T)$. Let $\xi$ be a smooth function with compactly supported derivatives. Recall that $\varphi_E(u)$ satisfies (\ref{varphi}), we see that in \cite[Lemma 4]{Fischer-ARMA-2015} we need to choose
\begin{align*}
v&= \varphi_E(u), \\
\nu&= -\mu^E, \\
q&= 0, \\
w&= 0,\\
z&= u\varphi_E'(u)\nabla v-\varphi_E'(u)\nabla u.
\end{align*}
Obviously, they satisfy the assumptions of \cite[Lemma 4]{Fischer-ARMA-2015}. Thus, we infer that for any $\psi\in C_0^\infty(\overline{\Omega}\times[0,T))$ the function $\xi(\varphi_E(u))$ must satisfy the estimate
\begin{align*}
   \bigg|&-\int_0^T\int_\Omega \xi(\varphi_E(u))\frac{d}{dt}\psi dxdt-\int_\Omega\xi(\varphi_E(u_0))\psi(\cdot,0)dx  \\
   & \quad-\int_0^T\int_\Omega \xi'(\varphi_E(u))u\varphi_E'(u)\nabla v\cdot\nabla\psi dxdt\\
   &\quad+\int_0^T\int_\Omega \xi'(\varphi_E(u))\varphi_E'(u)\nabla u\cdot\nabla\psi dxdt\\
   &\quad-\int_0^T\int_\Omega \psi\xi''(\varphi_E(u))u\varphi_E'(u)\nabla v\cdot\nabla\varphi_E(u)dxdt\\
   &\quad+\int_0^T\int_\Omega \psi\xi''(\varphi_E(u))\varphi_E'(u)\nabla u\cdot\nabla\varphi_E(u)dxdt\bigg|\\
   &\leq C(\Omega)\|\psi\|_{L^\infty}\sup_v|D\xi(v)|\mu^E|(\overline{\Omega}\times[0,T)).
\end{align*}
To obtain the desired equation for $\xi(u)$, we now pass to the limit $E\rightarrow\infty$. To do so, we use (\ref{truncation function}) as well as (\ref{Radon measure}); note that due to (\ref{Radon measure}), the left-hand side must be zero in the limit, that is, we obtain an exact equation in the limit.
Convergence of the terms in the first line is immediate, as is convergence of the terms in the second and the third line [observe that $\varphi_E(u)$ converges pointwise almost everywhere to $u$ and that the $\varphi_E'$ is bounded by a constant by (E5)].

It remains to deal with the fourth and the fifth line. To show convergence of the two terms, besides the fact $\nabla\sqrt{u}\in L^2([0,T];L^2(\Omega))$ we need the following assertion: there exists a constant $r$ such that for all $E>r$ the estimate $u\geq r$ implies $\xi'(\varphi_E(u(x,t)))=\xi'(u(x,t))=0$ and $\xi''(\varphi_E(u(x,t)))=\xi''(u(x,t))=0$.
Given this assertion, convergence of the remaining terms in the previous formula as $E\rightarrow\infty$ is also immediate since one factor in the integrals will be zero as soon as $u(x,t)$ becomes too large.

To show this assertion, choose $r$ so large that supp $D\xi\subset B_r(0)$. Let $E>r$. Then $u(x,t)\geq r$ implies $\varphi_E(u(x,t))\geq r$ and therefore $\xi'(\varphi_E(u(x,t)))=0$ as well as $\xi''(\varphi_E(u(x,t)))=0$. Combining with Lemma \ref{lem5.2}, we finish the proof of Theorem \ref{main result}. $\hfill\Box$

\vskip 2mm

\end{document}